\documentclass[11pt]{amsart}

\usepackage{float}
\usepackage{tikz}
\usepackage{amsmath, amsthm, color, fullpage}
\usepackage{graphicx, amsaddr}
\usepackage{enumerate}
% THEOREM ENV. ------------------------------------------------------------

\newtheorem{theorem}{Theorem}[section]

\newtheorem{conjecture}[theorem]{Conjecture}
\newtheorem{lemma}[theorem]{Lemma}

%\oddsidemargin  0.0in \evensidemargin 0.0in \textwidth      6.5in
%\headheight     0.0in \topmargin      0.0in \textheight=8.5in

  % unnumbered
\pagestyle{plain}

\title{1-planar graphs are odd 13-colorable }
\author{Runrun Liu$^{1}$, Weifan Wang$^{1}$,      Gexin Yu$^{2}$}

\address{
$^{1}$\small School of Mathematics, Zhejiang Normal University, Jinhua, Zhejiang 321004, China.\\
$^{2}$\small Department of Mathematics, William \& Mary, Williamsburg, VA 23185, USA.
}

\thanks{The research of the first author was supported by NSFC, China (No. 12101563) and Natural Science Foundation of Zhejiang Province (ZJNSFC), China (No. LQ22A010011). The research of the second author was supported by NSFC, China (No. 11771402;12031018)}

\email{rliu1206@zjnu.edu.cn,wwf@zjnu.edu.cn,gyu@wm.edu}

\begin{document}
\maketitle
\begin{abstract}
An odd coloring of a graph $G$ is a proper coloring such that any non-isolated vertex in $G$ has a coloring appears odd times on its neighbors.  The odd chromatic number, denoted by $\chi_o(G)$, is the minimum number of colors that admits an odd coloring of $G$.  Petru\v{s}evski and \v{S}krekovski in 2021 introduced this notion and proved that if $G$ is planar, then $\chi_o(G)\le9$ and conjectured that $\chi_o(G)\le5$. More recently, Petr and Portier improved $9$ to $8$. A graph is $1$-planar if it can be drawn in the plane so that each edge is crossed by at most one other edge. Cranston, Lafferty and Song showed that every $1$-planar graph is odd $23$-colorable. In this paper, we improved this result and showed that every $1$-planar graph is odd $13$-colorable.
\medskip

%\noindent\textbf{Keywords:}\ Planar graph; odd coloring; girth; discharging

\end{abstract}
 \baselineskip=16pt

\section{Introduction}

A proper k-coloring of $G$ is a mapping $f:V(G)\to[k]$ such that $f(u)\ne f(v)$ whenever $uv\in E(G)$, where $[k]=\{1,2,\cdots,k\}$. An {\em odd coloring} of a graph $G$ is a proper coloring such that any non-isolated vertex in $G$ has a color appearing odd times on its neighbors. The {\em odd chromatic number}, denoted by $\chi_o(G)$, is the minimum number of colors required in odd colorings of $G$.

The study of odd coloring of graphs was recently initiated by Petru\v{s}evski and \v{S}krekovski~\cite{PS21+},
as a relaxation of conflict-free coloring in the literature, which was first studied for hypergraphs.  A
coloring of vertices of a hypergraph is {\em conflict-free} if at least one vertex in each (hyper-)edge has
a unique color, see~\cite{ELRS03}. Cheilaris~\cite{C09} studied the conflict-free coloring of graphs. More specifically,
a graph G has a conflict-free coloring if for any vertex $v\in V(G)$, there is a color occurs exactly once
in $N(v)$. Conflict-free colorings have been widely studied, see~\cite{GST14,KKL12,PT09}. The odd coloring
of hypergraphs was introduced by Cheilaris et al. ~\cite{CKP13}, which requires each edge in $E(H)$ has some
color that occurs odd number of times on its vertices. This problem has been studied in~\cite{BMWW08,FG16}.

Clearly, $\chi(G)\le\chi_o(G)$, and furthermore,  $\chi_o(G)-\chi(G)$ can be arbitrarily large. In fact, consider the graph $G$ obtained by subdividing each edge once in a complete graph $K_n$. We can observe that $\chi(G)=2$ but $\chi_o(G)\ge n$. Note that a proper subgraph of $G$ may have higher odd chromatic number than that of $G$. For example, $C_4$ is a subgraph of $K_4-e$, but we have $\chi_o(C_4)=4>3=\chi_o(K_4-e)$.
The Four Color Theorem states that every planar graph has chromatic number $4$. Oberserve that $\chi_o(C_5)=5$ for a $5$-cycle $C_5$. Petru\v{s}evski and \v{S}krekovski~\cite{PS21+} proposed the following conjecture.

\begin{conjecture}
For every planar graph $G$ it holds that $\chi_o(G)\le5$.
\end{conjecture}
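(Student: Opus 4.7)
The plan is to attempt the conjecture through the standard minimum-counterexample-plus-discharging framework that underlies most planar coloring results, including the recent reductions of $\chi_o$ from $9$ to $8$. Let $G$ be a hypothetical counterexample minimizing $|V(G)|+|E(G)|$, so that every proper subgraph of $G$ admits an odd $5$-coloring. The goal is to produce a list of \emph{reducible configurations}: local substructures whose presence in $G$ contradicts minimality because an odd $5$-coloring of a suitable reduction of $G$ can be extended back to all of $G$.

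First, I would rule out low-degree vertices and short paths of low-degree vertices. For a vertex $v$ of degree at most $2$, one deletes $v$, takes an odd $5$-coloring of $G-v$, and recolors $v$ so that (i) its color is proper, (ii) the previously odd-count color at each remaining neighbor stays odd, and (iii) $v$ itself has some color appearing an odd number of times among its own neighbors. With $5$ colors this is tight but feasible at very small degree. Pushing this to degree-$3$ and degree-$4$ vertices in restricted neighborhoods typically requires delicate local recoloring of one or two nearby vertices to repair parity, and this is precisely the battleground on which any improvement from $8$ down to $5$ must be won.

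With a sufficiently rich list of reducible configurations in hand, I would assign each vertex and face of a plane embedding of $G$ the initial charge $d(x)-4$; Euler's formula gives total charge $-8$. Then I would design discharging rules that move the positive charge of large faces and high-degree vertices to the low-degree structures one wishes to forbid, and verify that every vertex and face ends up with nonnegative charge, yielding the desired contradiction. The exact rules must be tuned to the reducible list; in particular, triangulation-like regions populated by degree-$5$ and degree-$6$ vertices will be the bottleneck, since there the charge surplus is smallest.

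The main obstacle, and the reason the conjecture remains open, is precisely the sharpness of the bound. Since $\chi_o(C_5)=5$, any configuration that looks locally like a $5$-cycle of degree-$2$ vertices around a common hub is essentially non-reducible by purely local recoloring, so a discharging scheme must somehow dodge such structures while still extracting enough charge. A compounding obstacle is that the parity requirement couples non-adjacent vertices, so reducibility analyses cannot treat vertices in isolation as in ordinary proper coloring. I expect that closing the gap to $5$ will require either a genuinely new reducibility lemma at degree $5$ — for instance, one that handles a degree-$5$ vertex whose neighborhood contains a triangle — or a non-local argument exploiting the global structure of an auxiliary graph built from the parity constraints, rather than a purely local discharging computation.
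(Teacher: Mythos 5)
This statement is a conjecture, not a theorem of the paper: the authors merely restate the open conjecture of Petru\v{s}evski and \v{S}krekovski and offer no proof of it (the best known bound for planar graphs is $8$, due to Petr and Portier). So there is no ``paper's own proof'' to compare against, and your proposal does not close the gap either. What you have written is a description of the standard minimal-counterexample-plus-discharging template rather than an argument: you name no concrete reducible configurations beyond vertices of degree at most $2$, you state no discharging rules, and you verify no final-charge inequalities. The one reduction you do sketch (deleting a $2^-$-vertex and extending an odd $5$-coloring) is asserted to be ``tight but feasible'' without the count being carried out; with $5$ colors a degree-$2$ vertex already forbids up to two colors for properness, up to two more to preserve the odd color at each neighbor, and the extension must additionally give $v$ itself an odd color, so even this base case needs a careful argument rather than a wave at feasibility.

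More importantly, you yourself identify the fatal obstruction --- the sharpness witnessed by $C_5$ and the non-local parity coupling --- and conclude that a ``genuinely new reducibility lemma'' or a ``non-local argument'' would be required, neither of which you supply. That is an honest assessment of why the conjecture is open, but it means the proposal is a research program, not a proof. If you want to contribute something verifiable in the spirit of this paper, a more realistic target is to reprove the known bound $\chi_o(G)\le 8$ for planar graphs, or to adapt the authors' notion of \emph{easy} neighbors (Lemma~\ref{lem4}) and the poor/semi-poor face classification to sharpen the $1$-planar bound of $13$; those arguments show concretely how the parity constraints are absorbed into a discharging scheme, which is exactly the machinery your outline leaves unspecified.
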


In the same paper, they proved that $\chi_o(G)\le9$ for each planar graph $G$. Petr and Portier~\cite{PP22+} improved $9$ to $8$. For planar graph $G$ with girth at least $g$, Cranston~\cite{C22+} showed that $\chi_o(G)\le 6$ if $g=6$, and $\chi_o(G)\le 5$ if $g=7$; Cho, Choi, Kwon and Park~\cite{CCKP22} showed that $\chi_o(G)\le 7$ if $g=5$,  $\chi_o(G)\le 4$ if $g=11$, and $\chi_o(G)\le c$ if $g=\lceil\frac{4c}{c-2}\rceil$, where $c\ge 5$.
Let $mad(G)=\max_{H\subseteq G}2|E(H)|/|V(H)|$ be the maximum average degree of a graph $G$. Cranston~\cite{C22+} and Cho-Choi-Kwon-Park~\cite{CCKP22} also studied $\chi_o(G)$ when $mad(G)$ is bounded. In particular, it is shown in \cite{CCKP22} that $\chi_o(G)\le c$ if $c\ge 7$ and $mad(G)<\frac{4c}{c+2}$, and  $\chi_o(G)\le 4$ if $mad(G)\le \frac{22}{9}$ and $G$ contains no induced $5$-cycle.

A graph is {\em 1-planar} if it can be drawn in the plane so that each edge is crossed by at most one other edge. The notion of 1-planarity was initially introduced by Ringel~\cite{R65} in 1965, who proved that $\chi(G)\le7$ for any 1-planar graph $G$ and conjectured that every 1-planar graph is 6-colorable due to the 1-planar graph $K_6$. This conjecture was solved by Borodin~\cite{B84}, who also gave a new proof~\cite{B95} later.

A number of interesting results about structures and parameters of 1-planar graphs has been obtained in recent years. Fabrici and Madaras~\cite{FM07} proved that every 1-planar graph $G$ has $|E(G)|\le4|V(G)|-8$, implying $\delta(G)\le7$, and constructed a 1-planar graph $G$ with $\delta(G)=7$. Czap and Hud\'{a}k~\cite{CH13}  showed that every 1-planar graph can be decomposed into a planar graph and a forest. Dvo\v{r}\'{a}k, Lidick\'{y} and Mohar~\cite{DLM17} showed that a 1-planar graph drawn in the plane so that the distance between every pair of crossing is at least 15 is 5-choosable. %An independent crossing (IC)-planar graph is a 1-planar graph satisfying the condition that two pairs of crossing edges have no common end-vertices. It is shown in~\cite{YWWL21} that every IC-planar graph is 6-choosable.  
A proper vertex coloring of a graph is acyclic if every cycle uses at least three
colors. Borodin et al.~\cite{BKRS01} proved that every 1-planar graph is acyclically 20-colorable, and this result was improved to 18 by Yang, Wang and Wang~\cite{YWW20}. For more results about 1-planar graphs, we refer readers to a survey by Kobourov, Liotta and Montecchiani~\cite{KLM17}. 

In this paper, we focus on the odd coloring of 1-planar graphs. Cranston, Lafferty and Song~\cite{CLS22+} showed that every 1-planar graph is odd 23-colorable. We improved this result to 13.

\begin{theorem}\label{main}
Let $G$ be a 1-planar graph, then $\chi_o(G)\le13$.
\end{theorem}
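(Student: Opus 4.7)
The plan is to argue by minimum counterexample and discharging. Assume for contradiction that $G$ is a 1-planar graph with $\chi_o(G)>13$ having the fewest vertices (breaking ties with fewest edges) among all such counterexamples. Fix a 1-planar drawing of $G$ and let $G^{\times}$ denote its planarization, obtained by replacing every crossing with a new degree-$4$ ``crossing vertex.'' Coloring arguments will live in $G$; the discharging will be performed on the planar graph $G^{\times}$, using the density bound $|E(G)|\le 4|V(G)|-8$ of Fabrici--Madaras.

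The heart of the argument is to identify reducible configurations via an extension principle. If $v\in V(G)$ and $G-v$ admits an odd $13$-coloring $\varphi$ by minimality, then a color $c$ is admissible for $v$ unless either (i) $c$ is already used on some neighbor of $v$ (the properness obstruction), or (ii) $c$ is the \emph{unique} color that appears an odd number of times on $N_{G-v}(u)$ for some neighbor $u$ of $v$ (the parity obstruction: assigning $c$ to $v$ would flip $c$ to an even count in $N_G(u)$ and strip $u$ of its only odd-count color). Each neighbor of $v$ contributes at most two forbidden colors, so any $v$ with $\deg_G(v)\le 6$ has at least $13-12\ge 1$ admissible color available; it remains to secure $v$'s own parity condition, which is automatic when $\deg_G(v)$ is odd and can otherwise be fixed by local color swaps and by exploiting multiple admissible colors. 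Bootstrapping this principle, I would push to rule out additional configurations involving degree-$7$ vertices (for instance, a degree-$7$ vertex with several low-degree or pairwise ``drawing-close'' neighbors), using small uncolorable subgraphs removed in block and then extended.

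With a rich enough family of forbidden configurations, the final step is discharging on $G^{\times}$. Assign initial charges $\mu(x)=\deg(x)-4$ to every vertex and face of $G^{\times}$, so Euler's formula yields $\sum_x \mu(x)=-8$. I would design rules that send charge from high-degree vertices and large faces toward low-degree vertices and triangular faces, distinguishing true vertices of $G$ from crossing vertices of $G^{\times}$ and carefully tracking how an edge of $G$ may lie on the boundary of two faces of $G^{\times}$. The goal is to show that, under the forbidden configurations, every element ends with non-negative charge, producing the contradiction $0\le \sum_x \mu(x)=-8$.

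The main obstacle I anticipate is managing the parity side-effects of recoloring. Unlike ordinary proper coloring, changing a single neighbor's color can simultaneously flip the parity of two different vertices' neighborhoods, so the Kempe-style swap that repairs one vertex may break another. Keeping this interaction under control---and thereby squeezing reducibility tightly enough that the discharging closes at $13$ rather than at some weaker bound---is the real technical hurdle; once the set of reducible configurations is sufficiently rich, the discharging itself should be routine given that 1-planar graphs have average degree strictly less than $8$.
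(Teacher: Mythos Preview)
Your high-level plan---minimum counterexample, planarization $G^{\times}$, Euler charges $\mu(x)=\deg(x)-4$, and discharging---is exactly the paper's framework. But what you have written is a strategy, not a proof, and the missing ingredient is precisely the one that distinguishes $13$ from the earlier bound of $23$. The paper's key structural lemma is this: call a neighbor $w$ of $v$ \emph{easy} if $d(w)\le 6$, or $d(w)$ is odd, or $w$ itself has a $6^{-}$-neighbor; then any $v$ that is of odd degree or has a $6^{-}$-neighbor satisfies $d(v)\ge 7$ and has at most $2d(v)-13$ easy neighbors. The point is that an easy neighbor forbids only \emph{one} color (not two) when you try to extend to $v$, because you may uncolor its low-degree companion and recolor it last. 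This sharp count $2d(v)-13$ is what drives the entire discharging: it bounds the number of ``poor'' faces around $v$ and is invoked in essentially every final-charge verification. Your proposal gestures at ``a degree-$7$ vertex with several low-degree neighbors'' but never formulates or proves this lemma, and without it the discharging simply will not close at $13$.

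Two further remarks. First, your worry about Kempe-style swaps cascading parity problems is not how the paper proceeds; the extension arguments avoid recoloring chains altogether and instead temporarily \emph{uncolor} $6^{-}$-vertices, color the target, and then recolor the uncolored vertices (each of which has at most $12$ forbidden colors). Second, the Fabrici--Madaras edge bound is not used; the discharging is done directly on $G^{\times}$ via Euler's formula, and the work is in a rather intricate taxonomy of ``poor'' and ``semi-poor'' faces together with rules (R1)--(R4) that you have not supplied. Until you state and prove the easy-neighbor lemma and write down rules that you can verify leave every vertex and face nonnegative, the argument is incomplete.
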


Let $K_7^*$ be the graph obtained from the complete graph $K_7$ by subdividing each dege exactly once. Note that $K_7^*$ is a $1$-planar graph and $\chi_o(K_7^*)=7$. This is best lower bound we know about odd chromatic number of 1-planar graphs.

We introduce some notations used in this paper.  A vertex of degree $k$, at least $k$, and at most $k$
is called a {\em $k$-vertex}, a {\em $k^+$-vertex}, and {\em a $k^-$-vertex}, respectively.
Usually, we use $[u_1u_2\cdots u_k]$ to denote a face $f$ if $u_1,u_2,\ldots,u_k$ are the boundary vertices of
$f$ in a cyclic order. We say that a $k$-face $f=[u_1u_2\cdots u_k]$ is an $(a_1,a_2\ldots,a_k)$-{\em face}
 if the degree of the vertex $u_i$ is $a_i$ for $i =1,2,\ldots,k$.
A face of degree $k$, at least $k$, and at most $k$
is called a {\em $k$-face}, a {\em $k^+$-face}, and {\em a $k^-$-face}, respectively. An {\em odd color} of a vertex $v$ is a color that appears an odd number of times on $N(v)$.  If $\phi$ is an odd coloring of $G$ and $v\in V (G)$, we use $\phi_o(v)$ to denote an odd color of $v$ (arbitrarily choose one if there are more than one odd colors).

\section{Structures on minimal counterexamples to Theorem~\ref{main} }

In this section, we assume that $G$ is a 1-planar graph that does not have an odd coloring of $13$ colors, but any graph with fewer vertices than $G$ has one. Consider $G^*$ is a plane drawing of a 1-planar graph such that each edge has at most one crossing point, and all crossing points are as few as possible. So, for each pair of crossing edges $x_1y_1, x_2y_2\in E(G)$, the four end vertices $x_1,y_1,x_2,y_2$ are pairwise distinct.

Let $X(G)$ denote the set of crossing points in $G$. The associated plane graph, denoted $G^*$, of $G$ is a plane graph with $V(G^*)=V(G)\cup X(G), E(G^*)=E_0(G)\cup E_1(G)$, where $E_0(G)$ stands for the set of non-crossed edges in $G$ and $E_1(G)=\{xz,zy|xy\in E(G)\setminus E_0(G) \text{ and } z \text{ is a crossing point on } xy\}$. It is easy to observe that $d_{G^*}(v)=d_G(v)$ for each vertex $v\in V(G)$, and $d_{G^*}(v)=4$ for each vertex $v\in X(G)$, which we shall denote as a $4^*$-vertex. Note that no two vertices in $X(G)$ are adjacent by the 1-planarity. In a proper coloring of $G$, a vertex of odd degree will always has an odd color. The following Lemmas~\ref{lem1} to~\ref{lem6} are from ~\cite{CLS22+}. We write them in a little more general setting in the appendix, whose proofs are the same as those in~\cite{CLS22+}.

\begin{lemma}\label{lem1} (Claim 1 in ~\cite{CLS22+})
The graph $G$ is $2$-edge-connected and thus $\delta(G)\ge 2$.
\end{lemma}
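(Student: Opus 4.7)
The plan is to assume $G$ fails to be $2$-edge-connected and derive a contradiction from the minimality of $G$. Since $2$-edge-connectedness means \emph{connected and bridgeless}, I would split the argument into three cases: $G$ is disconnected, $G$ has a vertex of degree at most $1$, and $G$ has a bridge $uv$ with both endpoints of degree at least $2$. In each case, I want to reduce to a strictly smaller $1$-planar graph, apply the minimality hypothesis to get an odd $13$-coloring there, and then extend.

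If $G$ is disconnected, each component is a smaller $1$-planar graph and hence has an odd $13$-coloring by minimality; since every vertex's neighborhood lies inside a single component, the union is an odd $13$-coloring of $G$. If $v$ is an isolated vertex, extend an odd $13$-coloring of $G-v$ by giving $v$ any color, noting that the odd condition is vacuous at isolated vertices. If $v$ is a leaf with unique neighbor $u$, take an odd $13$-coloring $\phi$ of $G-v$ and set $c=\phi_o(u)$; assign $v$ any color in $[13]\setminus\{\phi(u),c\}$, which is possible since the forbidden set has size at most $2$. This extension is proper, $v$ is automatically odd (its lone neighbor $u$'s color appears once in $N(v)$), and $u$ keeps $c$ as an odd color because the parity of $c$ in $N(u)$ does not change.

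Finally, assume $\delta(G)\ge 2$ and that $uv$ is a bridge. The two components $G_1\ni u$ and $G_2\ni v$ of $G-uv$ are proper subgraphs of $G$ on fewer vertices, so by minimality each admits an odd $13$-coloring $\phi_i$. Since $d_G(u),d_G(v)\ge 2$ the vertices $u,v$ are non-isolated in $G_1,G_2$ respectively, so each has an odd color $c_u=\phi_{1,o}(u)$ and $c_v=\phi_{2,o}(v)$. I would then permute the $13$ colors on $G_2$ by a bijection $\pi$ chosen so that $\pi(\phi_2(v))\notin\{\phi_1(u),c_u\}$ and $\pi(c_v)\ne \phi_1(u)$; because $\phi_2(v)\ne c_v$, the constraints pin down the images of only two input colors and are very easily met among $13$ available colors. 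The resulting combined coloring is proper across $uv$ and preserves odd colors at both $u$ and $v$ (since neither $c_u$ nor $c_v$ is knocked out at its endpoint), producing an odd $13$-coloring of $G$, the desired contradiction.

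The main technical point lies in the bridge case: the independently chosen colorings on the two sides must be harmonized so that both bridge endpoints keep an odd color after the bridge is reinserted. With a palette of $13$ colors this is a trivial counting exercise, but with a smaller palette one would need a more delicate recoloring argument; here the size of the palette hides all the difficulty and makes the lemma essentially routine.
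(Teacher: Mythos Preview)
Your argument is correct and follows essentially the same strategy as the paper: color the two sides of a bridge separately by minimality and then relabel colors so that properness and oddness survive at both bridge endpoints. The only cosmetic differences are that you split off the disconnected and leaf cases explicitly (the paper folds these into the bridge argument) and that you use a single permutation on $G_2$ where the paper performs a transposition on each side; neither changes the substance of the proof.
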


\begin{lemma}\label{lem2} (Claim 2 in ~\cite{CLS22+})
Every vertex of odd degree in $G$ has degree at least $7$.
\end{lemma}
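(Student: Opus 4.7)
The plan is a standard minimal-counterexample reducibility argument. Suppose for contradiction that some vertex $v \in V(G)$ has odd degree with $d(v) < 7$; by Lemma~\ref{lem1} we have $\delta(G) \ge 2$, so the only possibilities are $d(v) \in \{3, 5\}$. I would consider $G - v$, which is still 1-planar and has fewer vertices, so by the minimality of $G$ it admits an odd 13-coloring $\phi$. It suffices to extend $\phi$ to a valid odd 13-coloring of $G$ by assigning an appropriate color $\phi(v) \in [13]$, contradicting the choice of $G$.

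The key observation to exploit is that because $d(v)$ is odd, the multiset of colors appearing on $N(v)$ has odd cardinality, so some color must appear an odd number of times on $N(v)$; hence $v$ automatically has an odd color no matter which color is assigned to it. Consequently the constraints on $\phi(v)$ reduce to (i) properness, i.e. $\phi(v) \notin \phi(N(v))$, and (ii) for each neighbor $u \in N(v)$, the vertex $u$ must still possess an odd color in $N_G(u)$ after $v$ is inserted.

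For (ii), I would observe that any odd color $\phi_o(u)$ of $u$ in $G - v$ remains an odd color of $u$ in $G$ unless $\phi(v) = \phi_o(u)$, since inserting $v$ with color $\phi(v)$ flips the parity of only the color $\phi(v)$ in $N_G(u)$. So it is enough to further forbid the at most $d(v)$ colors in $\{\phi_o(u) : u \in N(v)\}$. Combined with the at most $d(v)$ colors forbidden by (i), the total number of forbidden colors is at most $2d(v) \le 10 < 13$, so a valid color for $v$ exists.

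There is no serious obstacle here; the argument is driven entirely by the parity bonus enjoyed by odd-degree vertices together with a simple counting bound. The threshold $d(v) \le 5$ is precisely what comes out of the inequality $2d(v) < 13$, which explains why the same argument ceases to work once $d(v) = 7$ and thus why the remaining structural lemmas (handling small even-degree vertices and low-degree faces) will have to be substantially more delicate.
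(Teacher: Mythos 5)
Your proposal is correct and follows essentially the same minimal-counterexample argument as the paper's proof (given in the appendix for general $c$): delete $v$, use the parity of $d(v)$ to guarantee $v$ an odd color for free, and count at most $2d(v)\le 10<13$ forbidden colors (properness plus one odd color per neighbor). No meaningful differences.
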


\begin{lemma}\label{lem3} (Claim 4 in ~\cite{CLS22+})
Every edge incident to a $6^-$-vertex in $G$ has a crossing.
\end{lemma}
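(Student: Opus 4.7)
I will argue by contradiction. Suppose some $6^-$-vertex $u$ has an uncrossed incident edge $uv$. By Lemma~\ref{lem2}, since $d_G(u) \le 6$, $d_G(u)$ must be even, so $d_G(u) \in \{2, 4, 6\}$. By the minimality of $G$, the graph $G - u$ admits an odd $13$-coloring $\phi$. My plan is to extend $\phi$ to all of $G$ by assigning a single color to $u$, which would contradict the assumption that $G$ has no odd $13$-coloring.

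For the extension to be proper, $\phi(u)$ must avoid $F_1 := \{\phi(w) : w \in N(u)\}$, of size at most $d_G(u) \le 6$. For each $w \in N(u)$ to retain an odd color in $G$ (its only new neighbor is $u$, so $\phi_o(w)$ remains odd-count on $N_G(w)$ precisely when $\phi(u) \ne \phi_o(w)$), it suffices also to avoid $F_2 := \{\phi_o(w) : w \in N(u)\}$, of size at most $6$. Hence $|F_1 \cup F_2| \le 12 < 13$, and some color is available for $u$. The remaining requirement, call it $(\star)$, is that some color appears an odd number of times on $N(u)$; crucially, $(\star)$ depends only on $\phi|_{N(u)}$ and not on $\phi(u)$.

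If $(\star)$ already holds, any admissible choice of $\phi(u)$ yields an odd $13$-coloring of $G$, the desired contradiction. The main obstacle is the case when $(\star)$ fails, so every color on $N(u)$ has even multiplicity. Since $d_G(u) \le 6$ is even and colors pair up, $|F_1| \le d_G(u)/2 \le 3$, and two distinct neighbors $w_1, w_2 \in N(u)$ share a color under $\phi$. I plan to recolor one of them, say $w_i$, with a new color $c' \notin F_1$, so that $c'$ acquires multiplicity exactly $1$ on $N(u)$, restoring $(\star)$; the earlier extension argument then completes the coloring of $G$. With only three colors in $F_1$, the $13$-color palette leaves substantial slack for the further constraints on $c'$.

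The delicate step is ensuring that such a $c'$ exists and that the recoloring preserves the odd-coloring property throughout $G - u$. The constraints on $c'$ are (i) $c' \notin F_1$; (ii) $c' \ne \phi(x)$ for every $x \in N(w_i) \setminus \{u\}$ (properness at $w_i$); and (iii) for each such $x$, vertex $x$ still has an odd color after the swap. Constraint (iii) is subtle when $\phi_o(x) = \phi(w_i)$: removing one copy of $\phi(w_i)$ from $N(x)$ flips its parity from odd to even, forcing $x$ to rely on a different odd-count color. I expect to handle this by exploiting the flexibility of choosing $w_i$ between $w_1, w_2$ (or among any pair of equal-colored neighbors of $u$), combined with the smallness of $|F_1|$, and, if necessary, a local modification at $v$ enabled by the uncrossed edge $uv$ (which guarantees that $v$'s local drawing is free of crossing points along $uv$, simplifying any incidence bookkeeping). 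Carrying through these dependencies to confirm that a safe $c'$ always exists is, I anticipate, where the technical bulk of the proof lies.
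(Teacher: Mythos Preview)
Your argument has a genuine gap at the recoloring step, and the paper's proof takes a fundamentally different route that sidesteps the difficulty entirely.

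The problem is constraint (iii). When you recolor $w_i$ from its old color $a:=\phi(w_i)$ to a new color $c'$, then for every neighbor $x$ of $w_i$ in $G-u$ the parity of \emph{both} colors $a$ and $c'$ on $N(x)$ flips. The only way $x$ can lose all odd colors is if, before the recoloring, the odd-count colors on $N(x)$ were exactly $\{a,c'\}$. Thus for every $x\in N_{G-u}(w_i)$ on which $a$ has odd count and there is exactly one further odd-count color $\beta_x$, you must forbid $c'=\beta_x$. Together with the properness constraint $c'\notin \phi(N_{G-u}(w_i))$ this already gives up to $2(d(w_i)-1)$ forbidden colors, and $d(w_i)$ is unbounded (indeed, by Lemma~\ref{lem4} one even has $d(w_i)\ge 7$). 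Your hope of ``choosing between $w_1$ and $w_2$'' does not help, since both may have large degree; and the uncrossed edge $uv$ plays no actual role in your argument as written.

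The paper's proof avoids recoloring altogether by exploiting the uncrossed edge in a structural way. With $v$ the $6^-$-vertex and $uv$ uncrossed, one forms $G'$ from $G-v$ by adding an edge $uw$ for each $w\in N_G(v)\setminus N_G(u)$; because $uv$ has no crossing, each new edge $uw$ can be drawn along the old path $uvw$, so $G'$ remains $1$-planar. An odd $13$-coloring $\phi$ of $G'$ then satisfies $\phi(u)\ne\phi(w)$ for all $w\in N_G(v)\setminus\{u\}$, so $\phi(u)$ occurs exactly once on $N_G(v)$ and $v$ automatically has an odd color. Now $v$ can be colored by forbidding $\{\phi(w),\phi_o(w):w\in N_G(v)\}$, at most $12$ colors. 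The key idea you are missing is this graph modification: rather than trying to repair the oddness condition at $v$ after the fact, the paper arranges the auxiliary graph so that the condition is guaranteed from the start.
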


\begin{lemma}\label{lem5} (Claim 5 in ~\cite{CLS22+})
The graph $G^*$ has no loop or $2$-face. Every $3$-face in $G^*$ is incident to either three $7^+$-vertices or two $7^+$-vertices and one $4^*$-vertex.
\end{lemma}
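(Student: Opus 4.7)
The plan is to extract everything from the structure of $G^*$ together with the preceding lemmas, especially Lemma~\ref{lem3}. The only real ingredients needed are (i) the way $G^*$ is built from $G$ by subdividing crossed edges, (ii) the $1$-planarity constraint that no two $4^*$-vertices are adjacent, and (iii) Lemma~\ref{lem3}, which says every edge at a $6^-$-vertex has a crossing.

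First I would knock out loops and $2$-faces in $G^*$. Loops are immediate: a loop in $G^*$ would descend to a loop in $G$, but $G$ is simple (in particular, by Lemma~\ref{lem1} it is $2$-edge-connected and can be taken simple). For $2$-faces, I would argue by cases on the two (necessarily parallel) edges bounding the face. If both endpoints lie in $V(G)$, we would get parallel edges in $G$, violating simplicity. If one endpoint is a crossing vertex $z\in X(G)$, then the four $G^*$-neighbors of $z$ are the four pairwise distinct endpoints of the two crossed edges, so $z$ cannot have two parallel edges to the same vertex of $V(G)$. If both endpoints are in $X(G)$, then $1$-planarity forbids the adjacency outright. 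So no $2$-faces.

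Next I would analyze $3$-faces. Let $f=[u_1u_2u_3]$ be a $3$-face of $G^*$. Since no two crossing vertices are adjacent (by $1$-planarity), at most one of $u_1,u_2,u_3$ belongs to $X(G)$. The heart of the argument is to forbid any $6^-$-vertex of $V(G)$ from appearing on $f$. Indeed, if some $u_i$ were a $6^-$-vertex, then by Lemma~\ref{lem3} every edge of $G$ incident to $u_i$ is crossed, so in $G^*$ every neighbor of $u_i$ is a $4^*$-vertex. In particular the other two vertices of $f$ would both be in $X(G)$, contradicting the fact that at most one of them can be. Therefore every vertex of $V(G)$ on $f$ is a $7^+$-vertex, which leaves exactly the two allowed configurations: three $7^+$-vertices, or two $7^+$-vertices together with one $4^*$-vertex.

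There is really no main obstacle here, since all of the work is done by Lemma~\ref{lem3} and the $1$-planarity non-adjacency of crossing vertices; the proof is a short case analysis and is essentially a transcription of the plane-graph structure of $G^*$. The only thing to be careful about is the $2$-face analysis, where one must remember that crossing vertices in $G^*$ have four pairwise distinct neighbors — a direct consequence of the minimal-crossing drawing assumption made at the start of Section~2.
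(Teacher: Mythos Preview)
Your proposal is correct and follows essentially the same route as the paper. The only cosmetic difference is in the $2$-face case: where you invoke directly the ``four pairwise distinct endpoints'' fact established at the start of Section~2, the paper's appendix instead re-derives it in situ by observing that two parallel $vw$-edges at a $4^*$-vertex $w$ would force two crossed edges of $G$ sharing the endpoint $v$, which could be uncrossed to reduce the number of crossings---but this is the same content packaged slightly differently.
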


\begin{lemma}\label{lem6} (Claim 6 in ~\cite{CLS22+})
Every $2$-vertex $v$ in $G^*$ is incident to a $5^+$-face and a $4^+$-face.
\end{lemma}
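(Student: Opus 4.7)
The plan is to combine the previous lemmas with the local geometry at a crossing to control the faces around a $2$-vertex in $G^*$. Let $v$ be a $2$-vertex of $G^*$. Since every $4^*$-vertex has degree $4$, $v$ is also a $2$-vertex of $G$, and by Lemma~\ref{lem3} both of its edges in $G$ are crossed. Thus in $G^*$ the two neighbors of $v$ are crossing points $x_1, x_2$; write $vy_1$ for the edge of $G$ crossed at $x_1$ by an edge $ab$, and $vy_2$ for the edge crossed at $x_2$ by an edge $cd$, so that $N_{G^*}(x_1)=\{v,y_1,a,b\}$ and $N_{G^*}(x_2)=\{v,y_2,c,d\}$. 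Since $G$ is $2$-edge-connected by Lemma~\ref{lem1} and edge subdivision preserves $2$-edge-connectivity, $G^*$ is $2$-edge-connected as well, so the $2$-vertex $v$ is not a cut vertex in $G^*$ and its two corners lie in two distinct faces $f_1, f_2$. That both faces are $4^+$ is then immediate from Lemma~\ref{lem5}: no $2$-faces exist, and every $3$-face consists only of $7^+$- and $4^*$-vertices, neither of which $v$ is.

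For the $5^+$ conclusion I would argue by contradiction: assume both $f_1$ and $f_2$ are $4$-faces. Each then has the form $[v\,x_1\,u\,x_2]$ for some common $G^*$-neighbor $u$ of $x_1, x_2$ distinct from $v$. The crucial geometric fact is that at the crossing $x_1$ the two original edges $vy_1$ and $ab$ pass through transversally, so the cyclic order of edges around $x_1$ in the plane is $x_1v,\,x_1a,\,x_1y_1,\,x_1b$ (up to reflection); in particular $v$ and $y_1$ are opposite, and no face corner at $x_1$ pairs $vx_1$ with $x_1y_1$. Hence a face entering $x_1$ via $vx_1$ must leave via $x_1a$ or $x_1b$, giving $u\in\{a,b\}$, and by the symmetric argument at $x_2$, $u\in\{c,d\}$. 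Two distinct $4$-faces $f_1, f_2$ would force two distinct such $u$, hence $\{a,b\}=\{c,d\}$; but then the single edge $ab=cd$ of $G$ would cross $vy_1$ at $x_1$ and $vy_2$ at $x_2$, contradicting the $1$-planarity of $G$. So at least one of $f_1, f_2$ has size at least $5$.

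The main potential obstacle is the cyclic-order observation at a crossing point, and this is really the only step that is not direct bookkeeping. It follows from the definition of a crossing in a good drawing: the two crossing edges meet transversally at a single interior point, so the two halves of each crossing edge appear at opposite positions around the crossing vertex in the plane. Once this is granted, the proof reduces to routine combination of Lemmas~\ref{lem1}, \ref{lem3}, and \ref{lem5}.
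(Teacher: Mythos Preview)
Your argument is correct and follows essentially the same route as the paper's: both show that the two neighbors of $v$ in $G^*$ are $4^*$-vertices, rule out $3$-faces via Lemma~\ref{lem5}, and then derive a contradiction from two $4$-faces by analysing the crossing structure at those $4^*$-vertices. The only cosmetic difference is the endgame: you conclude that a single edge $ab=cd$ would be crossed twice, violating $1$-planarity, whereas the paper phrases the same conclusion as $G$ having a multiple edge between the two fourth vertices of the $4$-faces; these are two sides of the same observation once $\{a,b\}=\{c,d\}$ is established. Your explicit invocation of the alternating cyclic order at a crossing is exactly the point the paper uses implicitly.
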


For convenience, we call a vertex $v$ {\em easy} in $G$ if $v$ satisfies one of the following conditions:  (i) $d(v)\le6$; (ii) $d(v)\ge7$ and $d(v)$ is odd; (iii) $v$ has a neighbor of degree at most $6$. And we have the following observation.
\

{\bf Observation:} Let $u\in V(G)$. Let $\phi$ be an odd $c$-coloring of $G-u$. If $v$ is an easy neighbor of $u$, then we only need to forbid at most one color in $\{\phi(v),\phi_o(v)\}$ when coloring $u$. In fact,
\begin{itemize}
\item if $d(v)\le6$, then we may uncolor $v$ and therefore $u$ is forbidden to use one color $\phi_o(v)$. Note that after $G-v$ is colored, we can color $v$ by forbidding at most $12$ colors.

\item if $d(v)$ is odd, then $u$ is forbidden to use one color $\phi(v)$ since $v$ will always has an odd color.

\item if $v$ has a neighbor $v'$ of degree at most $6$, then we may uncolor $v'$ and therefore $u$ is forbidden to use one color $\phi(v)$. Note that after $G-v'$ is colored, we can color $v'$ by forbidding at most $12$ colors.
\end{itemize}

Our key contribution is the following lemma, which is a strengthening of Lemma 2.1 in \cite{CCKP22}.

\begin{lemma}\label{lem4}
If $v$ is of odd degree or has a $6^-$-neighbor, then $d(v)\ge7$ and has at most $(2d(v)-13)$ easy neighbors.
\end{lemma}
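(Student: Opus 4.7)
The plan is to argue by contradiction: suppose $v$ witnesses a failure of the lemma in the minimal counterexample $G$, either with $d(v) \le 6$ (while $v$ has a $6^-$-neighbor) or with $d(v) \ge 7$ and at least $2d(v)-12$ easy neighbors. By minimality, $G-v$ has an odd 13-coloring $\phi$, and I would extend it to $G$ by coloring $v$. The central estimate comes from applying the Observation to each neighbor $u$ of $v$. For each easy neighbor $u$, an appropriate uncoloring (of $u$ itself, of a $6^-$-neighbor of $u$, or none at all, according to which clause of ``easy'' applies to $u$) lets us forbid only one color in $\{\phi(u),\phi_o(u)\}$ for $\phi(v)$; for each non-easy neighbor $u$, both $\phi(u)$ and $\phi_o(u)$ must be forbidden. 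With $e$ easy neighbors in total, this gives a forbidden-color count of at most $e+2(d(v)-e)=2d(v)-e$.

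For the lower bound $d(v)\ge 7$: if $d(v)$ is odd, the bound is immediate from Lemma~\ref{lem2}, so I may assume $v$ has a $6^-$-neighbor. If in addition $d(v)\le 6$, then every neighbor of $v$ has $v$ itself as a $6^-$-neighbor and is therefore easy, so $e=d(v)$ and the forbidden total is at most $d(v)\le 6<13$, leaving a color available. For the upper bound $e\le 2d(v)-13$: assuming instead $e\ge 2d(v)-12$ gives $2d(v)-e\le 12<13$, again leaving at least one color available for $\phi(v)$.

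To turn an available color into a genuine odd 13-coloring of $G$, I still need (i) $v$ itself to have an odd color and (ii) to recolor every uncolored vertex. Item (i) is immediate when $d(v)$ is odd, since then some color must occur an odd number of times on $N(v)$ by parity of the total count; otherwise the hypothesized $6^-$-neighbor $u_0$ of $v$ is among the uncolored vertices, so its final color can be selected to make some color of $N(v)$ appear an odd number of times, costing at most one additional constraint on $\phi(u_0)$. For (ii), each uncolored vertex has degree at most $6$, so by the same Observation-style counting it has at most $12$ forbidden colors, leaving room to extend. The main obstacle I foresee is not the clean counting $2d(v)-e\le 12$ itself but the bookkeeping of the uncolorings: the same low-degree vertex may be uncolored on behalf of several of its neighbors, and the final simultaneous recoloring step---in particular, choosing $\phi(u_0)$ so as to satisfy $u_0$'s own proper and odd-color constraints while also inducing an odd color at $v$---must be shown to be consistent.
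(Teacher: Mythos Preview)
Your approach is the paper's: delete $v$, use the Observation to count at most one forbidden color per easy neighbor and two per non-easy neighbor, then supply $v$'s own odd color either by parity (odd degree) or by recoloring a $6^-$-neighbor at the end. The argument for the bound $e\le 2d(v)-13$ is identical to the paper's.

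Your argument for $d(v)\ge 7$, however, has a gap. You claim that when $d(v)\le 6$, every neighbor $w$ of $v$ is easy because $v$ itself is a $6^-$-neighbor of $w$, and conclude that the forbidden total is at most $d(v)\le 6$. The definition of ``easy'' is indeed satisfied, but the Observation's type-(iii) mechanism does not apply here: it works by uncoloring the $6^-$-neighbor $w'$ of $w$ and then recoloring $w'$ \emph{after} $\phi(v)$ has been fixed, so that $w'$'s new color can restore $w$'s odd color. When $w'=v$ there is no such separate recoloring step---you are choosing $\phi(v)$ exactly once---and to guarantee that a neighbor $w$ of even degree at least $8$ whose only $6^-$-neighbor is $v$ retains an odd color in $G$, you must still forbid $\phi_o(w)$ in addition to $\phi(w)$. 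The paper sidesteps this by using the direct bound of at most $2d(v)\le 12$ forbidden colors (six for properness, six for the neighbors' oddness), which already suffices. So your conclusion survives, but the count in this case should read $\le 2d(v)$ rather than $\le d(v)$.
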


\begin{proof}
First we show that $d(v)\ge7$. Suppose otherwise that $d(v)\le6$. By our assumption, $G-v$ has an odd $13$-coloring $\phi$. Then $v$ can be colored by avoiding at most $12$ colors, namely, at most $6$ colors used for $v$'s neighbors and at most $6$ additional colors in regard to oddness concerning the neighbors of $v$. If $v$ is of odd degree, there is at least one color appearing an odd number of times on $N_G(v)$. If $v$ has a $6^-$-neighbor $v'$, then we can recolor $v'$ by forbidding at most $12$ colors when $v$ has no odd color. In either case, we get an odd $13$-coloring of $G$ that extends $\phi$, a contradiction.

Now we show that $v$ has at most $(2d(v)-13)$ easy neighbors. Note it is trivial for $d(v)\ge13$. So assume that $d(v)\le12$. Suppose otherwise that $v$  has at least $(2d(v)-12)$ easy neighbors, thus has at most $d(v)-(2d(v)-12)=12-d(v)$ other neighbors. By our assumption, $G-v$ has an odd $13$-coloring $\phi$. Then $v$ can be colored by avoiding at most $(2d(v)-12)+2(12-d(v))=12$ colors, namely, at most one color in $\{\phi(w),\phi_o(w)\}$ for each easy neighbor $w$ of $v$ by the observation and at most two colors $\phi(w),\phi_o(w)$ for each non-easy neighbor $w$ of $v$. If $v$ is of odd degree, there is at least one color appearing an odd number of times on $N(v)$. If $v$ has a $6^-$-neighbor $v'$, then we can recolor $v'$ by forbidding at most $12$ colors when $v$ has no odd color. In either case, we get an odd $13$-coloring of $G$ that extends $\phi$, a contradiction.
\end{proof}

\begin{lemma}\label{lem7}
There is no $4$-face in $G^*$ with two $2$-vertices and two $4^*$-vertices.
\end{lemma}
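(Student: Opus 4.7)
The plan is to argue by contradiction and exploit the minimality of $G$. First I extract the structure: since no two $4^*$-vertices are adjacent in $G^*$ by $1$-planarity, the face must be $f=[x_1z_1x_2z_2]$ with $2$-vertices $x_1,x_2$ and crossings $z_1,z_2$. Reading off the rotations at $z_1,z_2$ and using that the four endpoints of a pair of crossing edges in $G$ are pairwise distinct, the original edges crossing at $z_1$ are $x_1y_1, x_2y_2$ and those crossing at $z_2$ are $x_1y_1', x_2y_2'$, so $N_G(x_1)=\{y_1,y_1'\}$ and $N_G(x_2)=\{y_2,y_2'\}$. By Lemma~\ref{lem4} each of $y_1,y_1',y_2,y_2'$ has degree at least $7$.

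Next I would set $G':=G-\{x_1,x_2\}$, which is $1$-planar with fewer vertices; by minimality $G'$ admits an odd $13$-coloring $\phi$. The task is to extend $\phi$ to $G$ by choosing colors $c_1,c_2\in[13]$ for $x_1,x_2$, with limited recoloring of the $y_i$'s when necessary. The key parity accounting is that reinserting $x_i$ with color $c_i$ flips only the count of $c_i$ in $N(y)$ for each $y\in N_G(x_i)$, leaving every other parity at $y$ unchanged. Consequently the only value of $c_1$ that can destroy $y_1$'s odd color is the unique odd color of $\phi$ on $N_{G'}(y_1)$ (and only when it is unique), and likewise for $y_1'$. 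Thus $c_1$ has at most four forbidden values---$\phi(y_1),\phi(y_1')$ from properness, and up to one ``bad'' color from each of $y_1,y_1'$---leaving $\ge 9$ admissible choices, and symmetrically for $c_2$. The remaining requirement is that $x_1,x_2$ themselves receive odd colors, which is equivalent to $\phi(y_1)\neq\phi(y_1')$ and $\phi(y_2)\neq\phi(y_2')$.

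If both inequalities hold, any admissible pair $(c_1,c_2)$ yields an odd $13$-coloring of $G$ extending $\phi$, contradicting minimality. Otherwise, assume $\phi(y_1)=\phi(y_1')$ (the $y_2$-case is symmetric). I would then temporarily uncolor $y_1$ and rechoose its color, exploiting the slack that in the modified instance $x_1$ and $x_2$ are simultaneously uncolored (removing their properness contributions) and that $x_1$ is a $6^-$-neighbor of $y_1$ (invoking the easy-neighbor framework of the Observation together with Lemma~\ref{lem4} on $y_1$). Using the sharper bound that each colored neighbor of $y_1$ costs at most one forbidden value for odd-color preservation, one can recolor $y_1$ to some $c_{y_1}\neq\phi(y_1')$, restoring the generic case; the analogous recoloring handles $\phi(y_2)=\phi(y_2')$.

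The main obstacle in writing out the full proof will be twofold: (a) verifying that the recoloring of $y_1$ can always be carried out with the extra constraint $c_{y_1}\neq\phi(y_1')$, which is tight and requires using either the parity of $d(y_1)$ (so that the preservation of $y_1$'s odd color is automatic by Lemma~\ref{lem2}), the availability of a second $6^-$-neighbor of $y_1$, or the slack from both $x_1,x_2$ being uncolored; and (b) handling coincidences among $\{y_1,y_1',y_2,y_2'\}$, since any $y$ shared between $N_G(x_1)$ and $N_G(x_2)$ has its parity flipped by both $c_1$ and $c_2$, coupling those choices, while a recoloring of such a shared $y$ disturbs two sides at once. These form a short list of subcases indexed by which of $y_1=y_2, y_1=y_2', y_1'=y_2, y_1'=y_2'$ hold (together with the degree parities of the $y_i$'s), and in each either choosing $c_1=c_2$ makes the two flips at the shared vertex cancel, or the surplus among the $\ge 9$ admissible values absorbs the extra joint constraint.
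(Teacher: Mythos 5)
Your approach is genuinely different from the paper's, and it has a real gap. The paper does not use the minimality of $G$ as a counterexample at all here: it observes that a $4$-face $[u,z_1,v,z_2]$ with $2$-vertices $u,v$ and crossings $z_1,z_2$ means $uu_1$ crosses $vv_1$ and $uu_2$ crosses $vv_2$, and then simply swaps the positions of $u$ and $v$ inside that face to obtain a drawing of the same graph with two fewer crossings, contradicting the choice of $G^*$ as a drawing with the fewest crossings. That is a two-line argument. Your reading of the local structure (the alternation of the two crossing edges around each $z_i$, the identification of $N_G(x_1)=\{y_1,y_1'\}$, $N_G(x_2)=\{y_2,y_2'\}$, and $d(y_i)\ge 7$ via Lemma~\ref{lem4}) is correct, and your parity bookkeeping for choosing $c_1,c_2$ in the generic case $\phi(y_1)\ne\phi(y_1')$, $\phi(y_2)\ne\phi(y_2')$ is sound.

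The gap is the case $\phi(y_1)=\phi(y_1')$, which you defer to a recoloring of $y_1$ that you do not carry out, and which the tools of this paper cannot carry out. Recoloring a vertex $y_1$ in place, keeping all other colors fixed, must avoid $\phi(w)$ for every $w\in N_{G'}(y_1)$ plus (in the worst case) one further color per neighbor to preserve oddness, plus the extra constraint $c_{y_1}\ne\phi(y_1')$; even with the sharpest count this is at least $2(d(y_1)-1)+1\ge 13$ forbidden values when $d(y_1)=7$, and it grows without bound with $d(y_1)$, since Lemma~\ref{lem4} caps the number of \emph{easy} neighbors but places no upper bound on $d(y_1)$ itself. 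The Observation's device of ``uncoloring'' low-degree vertices does not rescue this, because the vertex being recolored here has degree at least $7$, unlike the $6^-$-vertex $v'$ recolored in the proof of Lemma~\ref{lem4}. The coincidence subcases ($y_1\in\{y_2,y_2'\}$, etc.) are likewise only sketched. So as written the argument does not close; the intended proof is the drawing modification, which bypasses all of this.
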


\begin{proof}
Suppose otherwise. Then in $G$, there are $2$-vertices $u,v$ with $N_G(u)=\{u_1,u_2\}$ and $N_G(v)=\{v_1,v_2\}$ such that $uu_1$ intersects $vv_1$ and $uu_2$ intersects $vv_2$. Then we can modify the embedding to avoid the crossings by switching the locations of $u$ and $v$, a contradiction.
\end{proof}

\begin{lemma}\label{lem8}
The graph $G^*$ contains no $(7,7,8)$-faces.
\end{lemma}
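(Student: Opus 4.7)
The plan is a contradiction argument. Suppose $G^*$ has a $(7,7,8)$-face $f=[xyz]$ with $d(x)=d(y)=7$ and $d(z)=8$. By Lemma~\ref{lem5}, none of $x,y,z$ is a $4^*$-vertex, so $x,y,z\in V(G)$; consequently the three face-edges $xy,yz,xz$ are uncrossed edges of $G$.

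The first step is to extract structural information via Lemma~\ref{lem4}. Applied to the odd-degree vertex $y$ (of degree $7$), it shows $y$ has at most $2\cdot 7-13=1$ easy neighbor. Since $x$ is easy (having odd degree $7$), every other neighbor of $y$---including $z$---must be non-easy. Because $d(z)=8$ is even, non-easiness of $z$ forces condition (iii) of easiness to fail for $z$; i.e.\ $z$ has no $6^-$-neighbor. The symmetric application of Lemma~\ref{lem4} to $x$ confirms the same, so every neighbor of $z$ has degree at least $7$.

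The second step is a coloring reduction. By minimality, $G-\{y,z\}$ admits an odd $13$-coloring $\psi$, which I would extend first to $y$ and then to $z$. When coloring $y$, the Observation tells us that $x$ (easy by odd degree) contributes only the forbidden color $\psi(x)$, while each of the five non-easy $y_i\in N(y)\setminus\{x,z\}$ contributes both $\psi(y_i)$ and $\psi_o(y_i)$; this totals at most $1+2\cdot 5 = 11$ forbidden colors, leaving at least two candidates for $\psi(y)$. When coloring $z$, the easy vertices $x,y$ each contribute only one forbidden color and the six non-easy $z_i$'s each contribute two, for a raw total of $14$ slots. Using the two-way flexibility in $\psi(y)$ to force $\psi(y)$ to coincide with a color already forbidden from the $z_i$-side, I would reduce the number of distinct forbidden colors to at most $12$, leaving a valid color for $z$.

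The main obstacle is $z$'s own odd-color requirement, which is not automatic since $d(z)=8$ is even, together with the extremal subcase in which $\{\psi(x)\}\cup\{\psi(z_i):i\le 6\}\cup\{\psi_o(z_i):i\le 6\}$ happens to use all $13$ colors. For the former I would do a short parity analysis on the eight neighbor-colors of $z$, exploiting $\psi(x)\neq\psi(y)$ (from the edge $xy$) and the two candidate values for $\psi(y)$ to ensure some color has odd count on $N_G(z)$. For the latter, I would first recolor $x$---noting that $x$ has odd residual degree $5$ in $G-\{y,z\}$, hence admits at least three valid recolorings---to destroy the all-distinct configuration before reapplying the extension argument.
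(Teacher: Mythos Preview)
Your reduction is set up differently from the paper's, and the difference is exactly where the trouble lies. The paper deletes the \emph{two $7$-vertices} $u,v$ and keeps the $8$-vertex $w$; since both deleted vertices have odd degree, their own odd-color constraints are automatic. The tight case (all $13$ colors forbidden for $u$) is then handled by a swap: one deduces that $N(w)\setminus\{u\}$ carries at most four colors, uncolors $w$, gives $u$ the old color $\phi(w)$, and recolors $w$ from at most $5+6=11$ forbidden colors. Because $\phi(w)$ now appears exactly once on $N_G(w)$, the oddness of $w$ is guaranteed for free.

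You instead delete $y$ and the $8$-vertex $z$. Your structural step via Lemma~\ref{lem4} is correct (indeed $z$ is non-easy, hence has no $6^-$-neighbor), but that very fact removes the one easy repair mechanism for $z$'s oddness. The residual plan---pick $\psi(y)$ among two options so that the forbidden set for $z$ has size at most $12$, then fix $z$'s parity by a ``short'' argument---does not close. Concretely, suppose $|S|=12$ where $S=\{\psi(x)\}\cup\bigcup_i\{\psi(z_i),\psi_o(z_i)\}$, the multiset $\{\psi(z_i)\}$ has exactly two odd-count colors $a,b$, $\psi(x)=a$, and the two legal values for $\psi(y)$ are $b$ and the unique color $c\notin S$. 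Choosing $\psi(y)=b$ leaves $12$ forbidden colors for $z$ but makes every color on $N_G(z)$ occur an even number of times; choosing $\psi(y)=c$ restores parity but pushes the forbidden set to $13$. Your sketch invokes recoloring $x$ only for the $|S|=13$ case, and even there you have not argued that the (now interacting) choices of $\psi'(x)$ and $\psi(y)$ can be made simultaneously to (i) keep the forbidden set for $z$ at most $12$, (ii) give $z$ an odd color, and (iii) preserve oddness for the neighbors of $x$ in $G$, not just in $G-\{y,z\}$. That triple interaction is the missing idea, and it is exactly what the paper sidesteps by never putting an even-degree vertex in the deleted pair and by using the color-swap with $w$ to manufacture $w$'s odd color directly.
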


\begin{proof}
Suppose otherwise that $f=[uvw]$ is a $(7,7,8)$-face in $G^*$. By our assumption, $G-\{u,v\}$ has an odd $13$-coloring $\phi$. Then we can first color $v$ by forbidding at most 12 colors. Now we need to forbid at most $13$ colors when color $u$, namely, at most $7$ colors on $N_G(u)$ and at most $6$ colors in regard to oddness of its neighbors. Since $u$ cannot be colored, we may assume that $\phi_o(w)$ is the unique odd color of $w$, which implies that there are at most four colors on $N(w)-u$. Then we uncolor $w$ and color $u$ with $\phi(w)$. Then $w$ can be colored by forbidding at most $11$ colors, namely, at most $5$ colors on $N_G(w)$ and at most $6$ colors in regard to oddness of its neighbors. Since $\phi(w)$ on $u$ is unique in $N_G(w)$, we get an odd $13$-coloring of $G$, a contradiction.
\end{proof}

\section{Proof of Theorem~\ref{main}}

In this section, we will give a proof of Theorem~\ref{main} by way of discharging method on $G^*$.

For each vertex $x\in V(G^*)$ or face $x\in F(G^*)$, let its initial charge be $\mu(x)=d(x)-4$. Then by Euler formula, $\sum_{x\in V(G^*)\cup F(G^*)}=-8$. We will design a set of rules to redistribute the charges among vertices and faces of $G^*$, so that each $x\in V(G^*)\cup F(G^*)$ has a final charge $\mu^*(x)\ge 0$. This would give us a contradiction since $-8=\sum_x \mu(x)=\sum_x\mu^*(x)\ge 0$.

Note that only $2$-vertices and $3$-faces in $G^*$ has negative initial charges.

In the rest of this paper, for any vertex $v$, let  $d(v)=k$ and $N_G^*(v)=\{v_i:1\le i\le k\}$. Let $f_i$ be the face containing $v_i,v,v_{i+1}$ (index module $k$) for each $i\in[k]$ in $G^*$. We call a $2$-vertex $v$ {\em special} if $v$ is on a $4$-face in $G^*$, and a $7$-vertex $v$ {\em special} if $v$ is incident to seven $3$-faces such that $f_7$ is a $(7,7,10^+)$-face with $d(v_7)=7,d(v_1)\ge10$ and $f_i$ is a $(4^*,7,7^+)$-face for each $i\in[6]$ with $4^*$-vertices $v_2,v_4,v_6$. We call a face $f$ {\em poor} if it is a $k$-face with $k\in\{3,4,6\}$ satisfies one of the following condition:(i) $f$ is $(7,7,10^+)$-face with two special $7$-vertices; (ii) $f$ is a $4$-face $(u, 4^*, v, 4^*)$ such that $x,y$ are easy neighbors of $u$ in $G$ and $ux,uy$ intersect the two edges incident with the $2$-vertex $v$; (iii) $f$ is a $6$-face with two special $2$-vertices $v,w$ such that $x,y$ are easy neighbors of $u$ in $G$ and $ux,uy$ intersect two edges incident with the $2$-vertices $v,w$.  Note that a poor $4$-face must be a $(d, 4^*, 2, 4^*)$-face with $d\ge 4$ by Lemma~\ref{lem7} and  a poor $6$-face must be a $(d, 4^*, 2, 4^*, 2, 4^*)$-face with $d\ge 2$ by Lemma~\ref{lem3}. A face is {\em semi-poor} if it is a $4$- or $5$-face with a $2$-vertex, or a $6^+$-face $f$ with one $8^+$-vertex $v$ and $(d(f)-1)$ $2$-vertices or $4^*$-vertices but not a poor $4$- or $6$-face. Note that a semi-poor $5$-face must be a $(7^+, 7^+, 4^*, 2, 4^*)$-face and a semi-poor $6^+$-face must be a $(8^+, 4^*, 2, 4^*, \ldots, 2, 4^*)$-face by Lemma~\ref{lem3}. See some examples in Figure 1.

\iffalse
\begin{figure}[H]
\includegraphics[scale=2.3]{face.pdf}
\caption{$f_1$ is a poor $4$-face ($x,y$ are easy neighbors of $u$), $f_2$ is a poor $6$-face ($x,y$ are easy neighbors of $u$), $f_3$ is a semi-poor $5$-face and $f_4$ is a semi-poor $6$-face. All solid vertices are $2$-vertices and all crossing vertices are $4^*$-vertices.}
\end{figure}
\fi

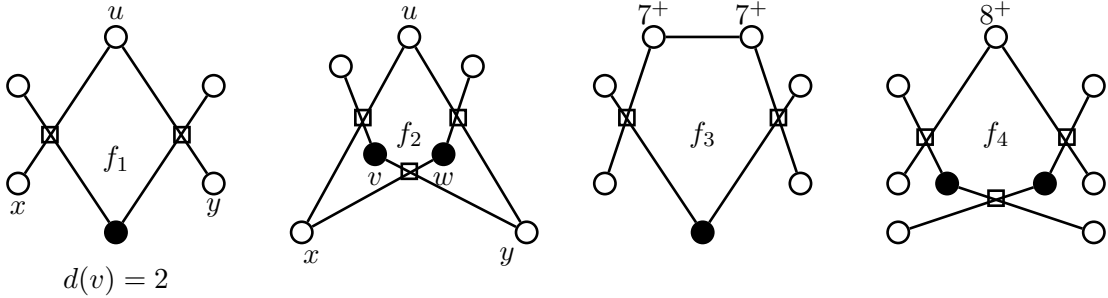
\begin{figure}[htb]\label{fig}
\centering
\begin{tikzpicture}[line width =1pt, scale=0.65, ]
\tikzstyle{point}=[circle, draw=black, inner sep=0.1cm]
\tikzstyle{square}=[rectangle, draw=black, inner sep=0.1cm]

%%%% below is a poor 4-face

\node[fill=black] (v11) at (1,3) [point] {};
\node[] (name)at(1,2) {$d(v)=2$};
\node (u11) at (1,7) [point] {};
\node (w11) at (-1,6) [point] {};
\node (w12) at (-1,4) [point] {};
\node (w13) at (3,6) [point] {};
\node (w14) at (3,4) [point] {};
\draw (u11)--(w12);
\draw (u11)--(w14);
\draw (v11)--(w11);
\draw (v11)--(w13);
\node[] (name)at(1,4.5) {$f_1$};
\node[] (name)at(-1,3.5) {$x$};
\node[] (name)at(3,3.5) {$y$};
\node[] (name)at(1,7.5) {$u$};
\node at (-0.35,5) [square] {};
\node at (2.35,5) [square] {};
%\draw (2.18,5.3)|-(2.5,4.75);
%\draw (-0.5,5.3)|-(-0.15,4.75);
%\draw (-0.5,5.3)-|(-0.15,4.75);
%\draw (2.18,5.25)-|(2.5,4.75);
%%%%%%% below is a poor 6-face

\node[fill=black] (u21) at (6.3,4.6) [point] {};
\node[] (name)at(6.3,4.1) {$v$};
\node[fill=black]  (u22) at (7.7,4.6) [point] {};
\node[] (name)at(7.7,4.1) {$w$};
\node[]  (v21) at (4.8,3) [point] {};
\node[]  (v22) at (9.4,3) [point] {};
\node[]  (v23) at (7,7) [point] {};
\node[]  (w21) at (5.6,6.4) [point] {};
\node[]  (w22) at (8.3,6.4) [point] {};
\node[] (name)at(7,5) {$f_2$};
\node[] (name)at(7,7.5) {$u$};
\node[] (name)at(5,2.5) {$x$};
\node[] (name)at(9,2.5) {$y$};
\draw (u21)--(w21);
\draw (u21)--(v22);
\draw (u22)--(w22);
\draw (u22)--(v21);
\draw (v21)--(v23);
\draw (v22)--(v23);
\node at (6.05,5.35) [square] {};
\node at (8,5.35) [square] {};
\node at (7,4.25) [square] {};
%\draw (6.2,5.45)|-(5.9,5.05);
%\draw (2,8.3)|-(5,8);
%\draw (6.2,5.45)-|(5.9,5.05);
%\draw (3,5.25)-|(2.5,4.75);

%%%%%%% below is a semi-poor 5-face

\node[fill=black]  (u31) at (13,3) [point] {};
\node[]  (v31) at (12,7) [point] {};
\node[]  (v32) at (14,7) [point] {};
\node[]  (w31) at (11,6) [point] {};
\node[]  (w32) at (11,4) [point] {};
\node[]  (w33) at (15,6) [point] {};
\node[]  (w34) at (15,4) [point] {};
\node[] (name)at(13,5) {$f_3$};
\node[] (name)at(12,7.5) {$7^+$};
\node[] (name)at(14,7.5) {$7^+$};
\node at (11.45,5.35) [square] {};
\node at (14.55,5.35) [square] {};
\draw (u31)--(w31);
\draw (u31)--(w33);
\draw (v31)--(w32);
\draw (v32)--(w34);
\draw (v31)--(v32);

%%%%%%% below is a semi-poor 6-face

\node[fill=black]  (u41) at (18,4) [point] {};
\node[fill=black]  (u42) at (20,4) [point] {};
\node[]  (v41) at (19,7) [point] {};
\node[]  (w41) at (17,6) [point] {};
\node[]  (w42) at (17,4) [point] {};
\node[]  (w43) at (21,6) [point] {};
\node[]  (w44) at (21,4) [point] {};
\node[]  (w45) at (17,3) [point] {};
\node[]  (w46) at (21,3) [point] {};
\node[] (name)at(19,5) {$f_4$};
\node[] (name)at(19,7.5) {$8^+$};
\node at (17.55,4.95) [square] {};
\node at (20.45,4.95) [square] {};
\node at (19,3.7) [square] {};

\draw (u41)--(w41);
\draw (u41)--(w46);
\draw (u42)--(w45);
\draw (u42)--(w43);
\draw (v41)--(w42);
\draw (v41)--(w44);
\end{tikzpicture}
\caption{$f_1$ is a poor $4$-face ($x,y$ are easy neighbors of $u$), $f_2$ is a poor $6$-face ($x,y$ are easy neighbors of $u$), $f_3$ is a semi-poor $5$-face, and $f_4$ is a semi-poor $6$-face. All solid vertices are $2$-vertices and all squared vertices are $4^*$-vertices.}
\end{figure}

Below are our discharging rules:

\begin{itemize}
%\item[(R1)] A poor $2$-vertex gets $\frac{1}{8}$ from each of its $8^+$-neighbors.
%\item[(R1)] Each $8^+$-vertex gives $1$ to each incident poor $4$-face and $\frac{1}{2}$ to each incident semi-poor $4^+$-face.
\item[(R1)] Each $8^+$-vertex gives $1$ to each incident poor $3^+$-face and $\frac{1}{2}$ to each incident semi-poor $4^+$-face or non-poor $3$-face.

\item[(R2)] Each $7$-vertex gives $\frac{1}{2}$ to each incident semi-poor $4$-face or semi-poor $5$-face with two $7$-vertices or $3$-face with a $4^*$-vertex. If $v$ is a non-special $7$-vertex on a $(7,7,10^+)$-face $f$, then $v$ gives $\frac{1}{2}$ to $f$.
%\item[(R2)] A $3$-face $f$ gets $\frac{1}{2}$ from each of its incident $7^+$-vertices if $f$ has a $4^*$-vertex, $\frac{1}{2}$ from each of its incident $8^+$-vertices if $f$ has no $4^*$-vertices.

\item[(R3)] A $5^+$-face $f$ gives $\frac{d(f)-4}{n_2(f)}$ to each incident $2$-vertex, where $n_2(f)$ is the number of $2$-vertices on $f$.

\item[(R4)] A $4^+$-face $f$ gives additional $\frac{m}{n'_2(f)}$ to each incident special $2$-vertex, where $n_2'(f)$ is the number of special $2$-vertices on $f$, and $m$ is the total charge that $f$ obtains from its incident vertices.
\end{itemize}

\begin{lemma}
Each face $f$ in $G^*$ has $\mu^*(f)\ge 0$.
\end{lemma}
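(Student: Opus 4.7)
The plan is to verify $\mu^*(f)\ge 0$ by a case analysis on $d(f)$. For each face I will compute the initial charge $d(f)-4$, the total charge $m$ that $f$ receives from incident $7^+$-vertices via (R1) and (R2), and the charge that $f$ sends via (R3) (to each incident $2$-vertex) and (R4) (to each incident special $2$-vertex). The rules (R3) and (R4) are arranged so that whenever $f$ carries a $2$-vertex, (R3) distributes exactly the initial surplus $d(f)-4$ among the $2$-vertices, while (R4) distributes all of $m$ among the special $2$-vertices. Thus the residual charge is either $0$, equal to $m\ge 0$ (when $f$ has $2$-vertices but no special ones), or equal to the full initial surplus (when $f$ has no $2$-vertex), and the main content is to verify that certain problematic configurations are excluded by the structural lemmas.

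For $d(f)=3$ (initial $-1$), I will invoke Lemma~\ref{lem5}: either $f$ is of type $(7^+,7^+,4^*)$, in which case each $7^+$-vertex contributes $\frac12$ (a $7$-vertex via the ``$3$-face with $4^*$-vertex'' clause of (R2), an $8^+$-vertex via (R1)), giving $\mu^*(f)\ge 0$; or $f$ has three $7^+$-vertices. In the latter subcase I will use Lemma~\ref{lem4} to rule out $(7,7,7)$ and $(7,7,9)$ (each would force a $7$-vertex to have at least two easy neighbors, exceeding the allowance $2\cdot 7-13=1$), and Lemma~\ref{lem8} to rule out $(7,7,8)$. A poor $(7,7,10^+)$-face receives $1$ from its $10^+$-vertex by (R1); a non-poor $(7,7,10^+)$-face receives $\frac12$ from at least one non-special $7$-vertex (last clause of (R2)) together with $\frac12$ from the $10^+$-vertex; and any $(7,8^+,8^+)$- or $(8^+,8^+,8^+)$-face already receives $1$ from (R1) alone.

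For $d(f)=4$ (initial $0$), Lemma~\ref{lem7} gives at most one $2$-vertex on $f$. Without a $2$-vertex, $f$ is neither poor nor semi-poor, receives nothing, and $\mu^*(f)=0$. With a $2$-vertex $v$ the boundary is $(u,4^*,v,4^*)$, $v$ is automatically special, and whatever $m$ the face receives from $u$ is handed back to $v$ via (R4). Lemma~\ref{lem4} excludes $u=7$ when $f$ is poor (two easy neighbors would violate the bound $2\cdot 7-13=1$), so either $u\ge 8$ and $m\in\{1,\tfrac12\}$ is passed to $v$, or $u\le 6$ and $m=0$; in either case $\mu^*(f)=0$. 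For $d(f)=5$ (initial $1$), a face with a $2$-vertex $v$ has boundary $(a,b,4^*,v,4^*)$, and Lemma~\ref{lem3} applied to the non-crossed edge $ab$ forces $a,b\ge 7$, so $f$ is semi-poor; (R3) sends $1$ to $v$ and (R4) redistributes any received $m\ge 0$. Without a $2$-vertex, $\mu^*(f)=1$.

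The cases $d(f)=6$ (initial $2$) and $d(f)\ge 7$ proceed analogously, split by whether $f$ is poor, semi-poor, or neither. A semi-poor $d(f)$-face has structure $(u,4^*,2,4^*,\ldots,2,4^*)$ with $u$ an $8^+$-vertex contributing $\frac12$ via (R1); combined with (R3) absorbing the initial surplus and (R4) redistributing the $\frac12$ to the special $2$-vertices, the computation closes. A poor $6$-face uses Lemma~\ref{lem4} to exclude $u=7$: if $u\ge 8$ then $m=1$ is passed to the two special $2$-vertices via (R4), while if $u\le 6$ then $m=0$ but (R3) alone already exhausts the initial surplus. For $d(f)\ge 7$ the face is never poor, and semi-poor is possible only for even $d(f)$ because two $4^*$-vertices cannot be adjacent by 1-planarity. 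Any non-poor non-semi-poor face receives nothing, and (R3) alone handles its initial surplus. The main delicate step throughout is excluding $(7,7,9)$-faces, poor $4$-faces with $u$ a $7$-vertex, and poor $6$-faces with $u$ a $7$-vertex via Lemma~\ref{lem4}; once these pathologies are removed, the remainder is routine bookkeeping.
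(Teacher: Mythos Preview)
Your proposal is correct and follows the same approach as the paper for $d(f)=3$: the classification into $(4^*,7^+,7^+)$-, $(7^+,8^+,8^+)$-, and $(7,7,10^+)$-faces via Lemmas~\ref{lem4}, \ref{lem5}, \ref{lem8}, together with the poor/non-poor split in the last case, is exactly what the paper does.

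For $d(f)\ge 4$, however, you do far more work than is needed. The observation you state at the outset---that (R3) hands out exactly the initial surplus $d(f)-4$ to the $2$-vertices (when any exist), and (R4) hands out exactly the received amount $m$ to the special $2$-vertices (when any exist)---already finishes the proof: the face is left with either $0$, $m\ge 0$, or $d(f)-4\ge 0$. The paper's proof for $d(f)\ge 4$ is literally one line invoking (R3) and (R4). Your subsequent case analysis (bounding the number of $2$-vertices on $4$-faces, excluding $u=7$ on poor $4$- and $6$-faces via Lemma~\ref{lem4}, checking that semi-poor $6^+$-faces have even length, etc.) is all true, but none of it is required for \emph{this} lemma; these structural facts only matter later when verifying that the $2$-vertices and $7^+$-vertices end with nonnegative charge.
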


\begin{proof}
Let $f$ be a face in $G^*$. By Lemma~\ref{lem5}, $d(f)\ge 3$.  If $d(f)=3$, then $f=[uvw]$ is either a $(4^*,7^+,7^+)$-face or a $(7^+,8^+,8^+)$-face or a $(7,7,10^+)$-face by Lemmas~\ref{lem3},~\ref{lem4} and~\ref{lem8}. In the first two cases, By (R1)(R2) $f$ gets $\frac{1}{2}$ from each of $v$ and $w$, so $\mu^*(f)\ge3-4+\frac{1}{2}\cdot2=0$. In the last case, $f$ gets $1$ from $w$ if both $u$ and $v$ are special by (R1),  and $f$ gets $\frac{1}{2}$ from $v$ and $w$ if at most one of $u,v$, say $u$, is special.  So $\mu^*(f)\ge 3-4+\max\{1,\frac{1}{2}\cdot 2\}=0$. If $d(f)\ge 4$, then by (R3) and (R4), $\mu^*(f)=0$.
\end{proof}

\begin{lemma}\label{5face}
Let $v$ be $2$-vertex and $u_1uvww_1$ be a segment of the boundary of $5^+$-face $f$. If none of $u_1$ and $w_1$ is a $2$-vertex, then $v$ gets at least $\frac{3}{2}$ from $f$ if $d(f)=5$ and at least $2$ from $f$ if $d(f)\ge 6$.
\end{lemma}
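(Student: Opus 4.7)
The plan is to split on $d(f)$ and apply the discharging rules (R3) and (R4), after first pinning down the local structure of $f$ around $v$. Since $v$ is a $2$-vertex, Lemma~\ref{lem3} forces both of $v$'s edges in $G$ to be crossed, so the boundary-neighbors $u,w$ of $v$ on $f$ are $4^*$-vertices. By $1$-planarity, no two $4^*$-vertices are adjacent in $G^*$, so neither $u_1$ nor $w_1$ is a $4^*$-vertex; combined with the hypothesis that they are not $2$-vertices, both $u_1$ and $w_1$ are ordinary vertices of $G$ with degree at least $3$.

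For the case $d(f)=5$, the boundary cycle is exactly $u_1 u v w w_1$, so the closing edge $u_1 w_1$ of $f$ is uncrossed (neither endpoint is a $4^*$-vertex). Applying Lemma~\ref{lem3} once more (every $6^-$-vertex has all incident edges crossed) then forces $d(u_1),d(w_1)\ge 7$, making $f$ a semi-poor $(7^+,7^+,4^*,2,4^*)$-face. By (R1) and (R2), $f$ collects at least $\tfrac12$ from $u_1,w_1$ in total---exactly $1$ when both are $7$-vertices or both are $8^+$-vertices, and exactly $\tfrac12$ in the ``mixed'' case. Since $v$ is the only $2$-vertex on $f$, (R3) delivers $1$ to $v$, and (R4) then routes the received vertex-charge $m\ge\tfrac12$ to $v$ as the unique special $2$-vertex on $f$, giving a total of at least $\tfrac32$.

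For $d(f)\ge 6$, I would show that $n_2(f)\le (d(f)-4)/2$, so that (R3) alone delivers $(d(f)-4)/n_2(f)\ge 2$ to $v$. The crucial observation is that the hypothesis propagates one step further along the boundary: if a position just outside $u_1$ or $w_1$ were itself a $2$-vertex, then its boundary-neighbor $u_1$ or $w_1$ would have to be a $4^*$-vertex, contradicting the above. Hence the nearest other $2$-vertex on $f$ sits at cyclic boundary-distance at least $4$ from $v$. Combining this with the standard spacing constraint that consecutive $2$-vertices on any face boundary must be separated by at least $2$ positions (sharing at least one $4^*$-vertex), a direct count around the cycle of length $d(f)$ (two gaps of size $\ge 4$ around $v$, plus $n_2(f)-2$ further gaps of size $\ge 2$) yields $d(f)\ge 2n_2(f)+4$, which is exactly the desired bound.

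The main obstacle is the $d(f)=5$ subcase, where the extra $\tfrac12$ beyond (R3)'s gift of $1$ hinges on (R4) actually targeting $v$. This requires $v$ to be a special $2$-vertex (incident to a $4$-face) and, moreover, the sole special $2$-vertex on $f$ so that the denominator $n_2'(f)=1$. Tracking the tight ``mixed'' subcase $(m=\tfrac12)$ and confirming the applicability of (R4) in the configurations that actually arise is the delicate step; the $d(f)\ge 6$ case is, by comparison, a routine boundary count once the spacing around $v$ is pinned down by the hypothesis.
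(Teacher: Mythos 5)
Your proof follows the paper's argument essentially step for step: for $d(f)=5$ you use the uncrossed edge $u_1w_1$ together with Lemma~\ref{lem3} to force $d(u_1),d(w_1)\ge 7$, so that $f$ is semi-poor, receives $m\ge\frac{1}{2}$ from $u_1,w_1$, and passes $1+m\ge\frac{3}{2}$ to $v$ via (R3) and (R4); for $d(f)\ge 6$ your gap count $d(f)\ge 2n_2(f)+4$ is equivalent to the paper's bound $n_2(f)\le\lceil\frac{d(f)-5}{2}\rceil$, after which (R3) alone delivers at least $2$. The one point you leave open --- whether (R4) actually targets $v$, i.e.\ whether $v$ is a \emph{special} $2$-vertex so that $n_2'(f)=1$ rather than $0$ --- is a genuine subtlety, but it is not a defect peculiar to your write-up: the paper's own proof simply says ``by (R3) and (R4)'' without checking that $v$ lies on a $4$-face, and in at least one place where the lemma is later invoked with $d(f)=5$ (Case 1 of the $2$-vertex analysis, where $v$ is incident to a $6$-face and a $5$-face) $v$ is not special under the stated definition, so (R4) as literally written would send it nothing. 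Your hesitation therefore points at an imprecision in the discharging rules as formulated rather than at a step your argument is missing relative to the paper's.
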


\begin{proof}
Assume that none of $u_1$ and $w_1$ is a $2$-vertex.  By Lemma~\ref{lem2}, $u_1$ and $w_1$ are $4^+$-vertices in $G$.
Let $d(f)=5$ first. Then $u_1w_1\in E(G)$ has no crossing. Thus $d(u_1),d(w_1)\ge 7$ by Lemma~\ref{lem3} and $f$ is semi-poor. So $f$ gets at least $\frac{1}{2}$ from $u_1$ and $w_1$ by (R1)(R2) and $v$ gets $5-4+\frac{1}{2}=\frac{3}{2}$ from $f$ by (R3) and (R4). Assume that $d(f)\ge 6$.  Then $f$ is incident to at most $\lceil \frac{d(f)-5}{2}\rceil$ $2$-vertices since each neighbor of a $2$-vertex is a $4^*$-vertex and two $4^*$-vertices cannot be adjacent in $G^*$ and the neighbors of $u_1,w_1$ on $f$ are not $2$-vertices. So $v$ gets at least $\frac{d(f)-4}{\lceil \frac{d(f)-5}{2}\rceil}\ge2$ from $f$ by (R3).
\end{proof}

\begin{lemma}
Each $2$-vertex $v$ in $G^*$ has $\mu^*(v)\ge 0$.
\end{lemma}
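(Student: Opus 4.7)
The initial charge of $v$ is $\mu(v) = 2 - 4 = -2$, and $v$ receives charge only through its two incident faces via (R3) and (R4). By Lemma~\ref{lem6}, these are a $5^+$-face $f_1$ and a $4^+$-face $f_2$; my task is to show that together they deliver at least $2$ to $v$. I split on whether $v$ is \emph{special} (i.e., on a $4$-face).

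\emph{Case A ($v$ special).} Then $f_2 = (a, 4^*, v, 4^*)$ by Lemma~\ref{lem7}, and I sub-divide on whether $f_2$ is poor or semi-poor. If $f_2$ is poor, then (R1) delivers $1$ to $f_2$ (using that $a$ must be an $8^+$-vertex, argued via Lemma~\ref{lem4} applied to $a$'s easy neighbors coming from the poor-face definition), which (R4) then passes entirely to $v$ as the unique special $2$-vertex of $f_2$; combined with $\ge 1$ from $f_1$ via (R3), this yields $\ge 2$. If $f_2$ is only semi-poor, $a$ contributes $\ge \tfrac12$ via (R1) or (R2), so $v$ receives $\ge \tfrac12$ from $f_2$ via (R4); I then extract the remaining $\ge \tfrac32$ from $f_1$ via Lemma~\ref{5face}, whose hypothesis on the flanking vertices $u_1, w_1$ of $f_1$ holds because any non-$2$-vertex neighbor of a $4^*$-vertex on the $f_1$-side must be a $7^+$-vertex by Lemma~\ref{lem3}.

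\emph{Case B ($v$ non-special).} Both $f_1, f_2$ are $5^+$-faces. The main sub-case is when on each face the flanking vertices $u_1, w_1$ of $v$'s $4^*$-neighbors are not $2$-vertices; Lemma~\ref{5face} then supplies $\ge \tfrac32 + \tfrac32 = 3$, well more than needed. The subtle sub-case is when some $u_1$ or $w_1$ is itself a $2$-vertex, which forces that face to be $6^+$ and to contain an alternating cluster of $2$- and $4^*$-vertices along its boundary. I would apply Lemma~\ref{lem4} to the vertices capping such an alternating cluster on the \emph{opposite} face (each of which has at least two easy neighbors---the adjacent $2$-vertex of the cluster and the partner cap, which itself is easy via its own $2$-vertex neighbor) to force their degrees to $\ge 8$. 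The resulting $\tfrac12$-contributions via (R1) to the opposite $5^+$-face, redistributed through (R4), then flow back to $v$.

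The main obstacle is the extreme sub-case where $f_2$ itself is a fully alternating $(2,4^*,2,4^*,2,4^*)$ hexagonal face: then $f_2$ has no $7^+$-vertex and collects nothing via (R1)/(R2), so $v$ receives only $\tfrac23$ from $f_2$ via (R3) and must extract a full $\tfrac43$ from $f_1$. Closing this gap requires combining (i) the degree-$8$ lower bound on the two caps of the cluster forced by Lemma~\ref{lem4}, (ii) the resulting surplus on $f_1$, and (iii) a careful accounting of whether the other $2$-vertices on $f_2$ are themselves special so that (R4) actually redirects that surplus to $v$. This three-way bookkeeping, plus the possibility of ruling out such configurations outright by combining $1$-planarity with Lemmas~\ref{lem3} and~\ref{lem7}, is where I expect the proof to require the most delicate argument.
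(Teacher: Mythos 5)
Your overall decomposition (special vs.\ non-special $2$-vertex, then poor/semi-poor bookkeeping plus Lemma~\ref{5face}) is the right shape, but there are concrete gaps at exactly the points the paper has to work hardest. First, in Case A with $f_2=(a,4^*,v,4^*)$ semi-poor, your claim that $a$ contributes $\ge\tfrac12$ via (R1) or (R2) fails: Lemma~\ref{lem7} only rules out $a$ being a $2$-vertex, and by Lemma~\ref{lem2} we can have $d(a)\in\{4,6\}$, in which case neither (R1) nor (R2) fires and $v$ gets $0$ from the $4$-face. The paper handles precisely this situation (its sub-case ``$d(u_1)\le 6$'' when $d(f_2)=5$) by a different mechanism: it shows the two $7^+$-vertices $w_1,w_2$ on the adjacent $5$-face each have \emph{two} easy neighbors (the $6^-$-vertex $a$ and each other), so Lemma~\ref{lem4} forces $d(w_1),d(w_2)\ge 8$, the $5$-face collects $1$ and forwards $2$ to $v$. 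Second, your justification that Lemma~\ref{5face} applies to $f_1$ is circular: you argue that any \emph{non-$2$-vertex} flanking neighbor is a $7^+$-vertex, but the hypothesis you must verify is that the flanking vertices are not $2$-vertices, and they genuinely can be (e.g.\ when $f_1$ is a $6$-face of type $(d,4^*,2,4^*,2,4^*)$). This is not a fringe case: it is where the entire poor/semi-poor $6$-face and $8$-face machinery and the ``special $2$-vertex'' clause of (R4) are actually consumed, and it afflicts your Case A as much as your Case B.

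Third, the configuration you flag as the ``main obstacle'' --- the fully alternating hexagon --- is left as a plan rather than a proof, and the paper's resolution is not the one you anticipate. The paper first shows that if such a hexagon meets a $4$-face at $v$, one can switch the positions of two of its $2$-vertices to delete a crossing, contradicting the minimality of the drawing; hence the other face is a $5^+$-face, its flanking vertices are forced to degree $\ge 7$ by Lemma~\ref{lem4} (they are $G$-neighbors of $2$-vertices), and Lemma~\ref{5face} then yields $\tfrac23+\tfrac32>2$. No degree-$8$ caps or (R4) redistribution are needed there. Similarly, the $d(f_2)\ge 7$ sub-case (in particular the $8$-face with three $2$-vertices) requires its own count of $2$-vertices via the non-adjacency of $4^*$-vertices, which your sketch does not supply. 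So while the skeleton matches the paper, the proposal as written does not close the cases that carry the real difficulty.
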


\begin{proof}
Let $N_H(v)=\{u,w\}$ and $f_1,f_2$ be the two faces incident to $v$. Then $v$ has initial charge $-2$. Since two $2$-vertices cannot be adjacent in $G^*$ by Lemma~\ref{lem3} and two $4^*$-vertices cannot be adjacent in $G^*$ by the 1-planar property of $G$, $f_i$ is incident to at most $\frac{d(f_i)-3}{2}$ $2$-vertices when $d(f_i)$ is odd and at most $\frac{d(f_i)}{2}$ $2$-vertices when $d(f_i)$ is even for each $i\in[2]$. If neither $f_1$ nor $f_2$ is a $4$-face or $6$-face with three $2$-vertices, then for each $i\in[2]$, $v$ gets at least $\frac{d(f)-4}{n_2(f)}\ge1$ from each of $f_1$ and $f_2$ by (R3), so $\mu^*(v)\ge-2+1+1=0$.

Let $f_1=[uu_1u_2\cdots u_{t-3}wv]$ and $f_2=[uvww_1w_2\cdots w_{k-3}]$, where $t=d(f_1),k=d(f_2)$. Then we have the following two remaining cases. %So by Lemma~\ref{5face} we consider the following two cases.

{\bf Case 1:} $f_1$ is a $6$-face with three $2$-vertices. Then $d(v)=d(u_1)=d(u_3)=2$ and by Lemma~\ref{lem5} $f_2$ is a $4^+$-face.

Note that $u_2$ is a $4^*$-vertex. If $d(f_2)=4$, then we can modify the embedding to avoid the crossing vertex $u_2$ by switching the locations of $u_1$ and $u_3$, a contradiction. So we may assume that $f_2$ is a $5^+$-face.
By (R3), $v$ gets $\frac{d(f)-4}{n_2(f)}=\frac{2}{3}$ from $f_1$. Note that $u_1w_{k-3}, u_3w_1\in E(G)$. By Lemma~\ref{lem4},  $d(w_{k-3}),d(w_1)\ge7$. By Lemma~\ref{5face} $f$ gets at least  $\frac{3}{2}$ from $f_2$, so $\mu^*(v)\ge-2+\frac{2}{3}+\frac{3}{2}>0$.

{\bf Case 2:} $f_1$ is a $4$-face and $f_2$ is a $5^+$-face and incident to at most two $2$-vertices if $d(f_2)=6$.

If $d(f_2)=5$, then $f_2$ is a $(7^+,7^+,4^*,2,4^*)$-face by Lemma~\ref{lem3}.  By Lemma~\ref{5face}, $v$ gets at least  $\frac{3}{2}$ from $f_2$.  If $d(u_1)\ge7$, then $w_1$ gives $\frac{1}{2}$ to $f_1$ by (R1)(R2). By (R4) $f_1$ gives $\frac{1}{2}$ to $v$. So $\mu^*(v)\ge-2+\frac{1}{2}+\frac{3}{2}=0$. If $d(u_1)\le6$, then $d(w_1),d(w_2)\ge8$ by Lemma~\ref{lem4}. So $f_2$ gets $\frac{1}{2}\cdot2=1$ from $w_1, w_2$ and send $1+1=2$ to $v$ by (R3)(R4). So $\mu^*(v)\ge-2+2=0$.  Therefore we assume that $d(f_2)\ge 6$.

Let $d(f_2)=6$.  If $n_2(f_2)=1$, then $v$ gets $2$ from $f_2$, which implies that $\mu^*(v)\ge-2+2=0$. So we may assume that $n_2(f_2)=2$, by symmetry, say $d(w_1)=2$. Then $w_3$ is a $4^+$-vertex in $G$ by Lemma~\ref{lem2}. By Lemma~\ref{lem4}, $u_1$ and each vertex in $N_G(v)$ have degree at least $7$.  If $d(w_3)\le 7$, then $f_1$ is a poor $4$-face, thus $f_1$ gets $1$ from $u_1$ and sends it to $v$, and $\mu^*(v)\ge -2+1+1=0$ since $f_2$ sends at least $1$ to $v$ as well. So we assume that $d(w_3)\ge 8$. Note that $v$ is a special $2$-vertex. %By (R2) $f_1$ gets $\frac{1}{2}$ from $u_1$ and sends the charge to $v$. By (R3) $v$ gets $1$ from $f_2$.
If $w_1$ is not a special $2$-vertex, then $f_2$ is a semi-poor $6$-face, thus by (R1) and (R4) $f_2$ gets $\frac{1}{2}$ from $w_3$ and sends $1+\frac{1}{2}$ to $v$, and $\mu^*(v)\ge -2+ \frac{3}{2}+\frac{1}{2}=0$ since $f_1$ sends $\frac{1}{2}$ to $v$ as well. Let $w_1$ be a special $2$-vertex as well. Then $f_2$ is a poor $6$-face to $w_3$, thus by (R1), $f_2$ gets $1$ from $w_3$ and sends $1+\frac{1}{2}$ to $v$ (and $w_1$), so $\mu^*(v)\ge -2+\frac{3}{2}+\frac{1}{2}=0$ since $f_1$ sends $\frac{1}{2}$ to $v$ as well.

%\blue{Let $w_1$ be a special $2$-vertex. Then $w_1$ is on a $4$-face, say $f_3=[w_1w_2xw]$. Now $xw_3\in E(G)$ and $x, u_1$ are easy neighbors of $w_3$. So $f_2$ is a poor $6$-face, and by (R1) and (R4) $f_2$ gets $1$ from $w_3$ and sends evenly $\frac{1}{2}$ to $v$. So in either case, $\mu^*(v)\ge-2+1+\frac{1}{2}+\frac{1}{2}=0$.} %\red{(I think we don't have a rule to deal with this case yet. In this case, $w_1$ needs to be on a poor $4$-face.  Thus $x$ sends $1$ to $f_3$ and $f_3$ sends it to $w_1$, thus $w_1$ only needs $1$ from $f_2$ and $f_2$ is able to send $1+\frac{1}{2}$ to $v$. )}

Suppose that $d(f_2)\ge 7$. By Lemma~\ref{5face} we may assume that $w_1$ or $w_{k-3}$, say $w_1$, is a $2$-vertex. Then $d(u_1)\ge 7$ by Lemma~\ref{lem4}. If $d(w_{k-3})\le7$, then $f_1$ is a poor $4$-face and $d(u_1)\ge8$. So $f_1$ gets $1$ from $u_1$ and send it to $v$. Note that $f_2$ is incident to at most $\lfloor\frac{d(f_2)}{2}\rfloor$ $2$-vertices. So $v$ gets at least $1$ from $f_2$. So $\mu^*(v)\ge-2+1+1=0$.   Let  $d(w_{k-3})\ge8$. Then $f_2$ has at most $\lfloor\frac{d(f)-2}{2}\rfloor$ $2$-vertices. If $f_2$ is not a $8$-face with three $2$-vertices, then $v$ gets at least $\frac{d(f)-4}{\lfloor\frac{d(f)-2}{2}\rfloor}\ge\frac{3}{2}$ from $f_2$. So $\mu^*(v)\ge-2+\frac{1}{2}+\frac{3}{2}=0$. If $f_2$ is a $8$-face with three $2$-vertices, then $f_2$ is a semi-poor $8$-face. So $f_2$ gets $\frac{1}{2}$ from $w_{k-3}$ and sends additional $\frac{1}{6}$ to each incident $2$-vertex by (R1) and (R4). So $v$ gets at least $\frac{8-4}{3}+\frac{1}{6}=\frac{3}{2}$ from $f_2$. So $\mu^*(v)\ge-2+\frac{1}{2}+\frac{3}{2}=0$.
\end{proof}

\begin{lemma}
Each $3^+$-vertex $v$ in $G^*$ has $\mu^*(v)\ge 0$.
\end{lemma}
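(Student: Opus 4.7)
The plan is to argue by cases on $d(v)$. The cases $d(v) \in \{3, 5\}$ are vacuous (Lemma~\ref{lem2} forbids odd degrees below $7$ in $G$, and every $4^*$-vertex has degree $4$ in $G^*$). For $d(v) \in \{4, 6\}$, no rule among (R1)--(R4) involves $v$, so $\mu^*(v) = d(v) - 4 \ge 0$.

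For $d(v) = 7$ with initial charge $3$: by (R2), each incident ``chargeable'' face (a semi-poor $4$-face, a semi-poor $5$-face with two $7$-vertices, a $3$-face with a $4^*$-vertex, or, only when $v$ is non-special, a $(7,7,10^+)$-face) costs $\frac{1}{2}$; no $6^+$-face is chargeable for a $7$-vertex. If $v$ is special, the special structure yields exactly six chargeable faces, so $v$ gives exactly $3$. For $v$ non-special, the goal is to show $v$ has at most six chargeable faces. Two key tools will be Lemma~\ref{lem4} (a $7$-vertex has at most one easy neighbor) and its consequence applied to any $2$-vertex $Z$ (no $2$-vertex has a $\le 6$-neighbor, so the $G$-continuation of $Z$ through any incident crossing is a $7^+$-vertex).

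Assuming for contradiction all seven faces at $v$ are chargeable, I first rule out any semi-poor $4$- or $5$-face $f$ at $v$. Let $Z$ be its $2$-vertex and $c$ a $4^*$-vertex of $f$ that is both $v$'s and $Z$'s $G^*$-neighbor on $f$'s boundary; let $w$ be the $G$-continuation of $Z$ through $c$. Consider the face $f'$ of $v$ across the edge $vc$: (a) if $f'$ is also a semi-poor face, the rotation at $c$ forces $w$ to coincide with $f'$'s $2$-vertex, contradicting the non-adjacency of $2$-vertices in $G$; (b) if $f'$ is a chargeable $3$-face, then $w$ is the third vertex of $f'$ (so a $G$-neighbor of $v$), and since $Zw \in E(G)$ with $Z$ a $2$-vertex, $w$ is easy. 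Applying this to both $4^*$-neighbors of $v$ on $f$ (for a $4$-face) or combining with the other $7$-vertex on $f$ (for a $5$-face) produces either an immediate contradiction or two distinct easy neighbors of $v$, violating Lemma~\ref{lem4}. Hence all seven chargeable faces at $v$ are $3$-faces, each of type $(4^*, 7, 7^+)$ or $(7, 7, 10^+)$. Since two $4^*$-vertices cannot be adjacent in $G^*$, a parity argument on the cyclic positions of the $4^*$-neighbors of $v$ forces at least one $(7, 7, 10^+)$-face; Lemma~\ref{lem4} applied to the $7$-vertex $G$-neighbors contributed by such faces pins the configuration down to exactly six $(4^*, 7, 7^+)$-faces and one $(7, 7, 10^+)$-face --- i.e.\ the definition of a special $7$-vertex, contradicting non-specialness.

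For $d(v) \ge 8$ with initial charge $d(v) - 4$: by (R1), each incident face receives at most $1$ (attained only on poor $3^+$-faces) and typically $\frac{1}{2}$. A poor $3$-face at $v$ forces $d(v) \ge 10$ (since it is $(7, 7, 10^+)$ and $v$ is the $10^+$-vertex), and a poor $4^+$-face at $v$ requires $v$ to have at least two easy $G$-neighbors, which Lemma~\ref{lem4} constrains. Moreover, the ``no $\le 6$-neighbor of a $2$-vertex'' observation rules out two consecutive semi-poor or poor $4$-faces at $v$, bounding the number of $4$-faces at $v$ with a $2$-vertex. A case count by $d(v)$ then shows the total outgoing is at most $d(v) - 4$. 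The main obstacle throughout is the non-special $7$-vertex case, where the structural propagation of $2$-vertex continuations through $4^*$-crossings must be combined delicately with the tight easy-neighbor bound of Lemma~\ref{lem4}.
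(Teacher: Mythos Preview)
Your treatment of $d(v)\le 6$ and $d(v)=7$ is essentially the paper's argument, only more explicit in tracking the continuation of a $2$-vertex through a crossing; that part is fine.

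The genuine gap is the $d(v)\ge 8$ case. Saying ``a case count by $d(v)$ then shows the total outgoing is at most $d(v)-4$'' is not a proof, and the three facts you list before it are not enough to close the count. The key quantity is $m_p(v)$, the number of incident poor faces: since each poor face costs $1$ and every other incident face costs at most $\tfrac12$, you need $m_p(v)\le d(v)-8$, but the easy-neighbor bound from Lemma~\ref{lem4} together with ``each poor face contributes two distinct easy neighbors and poor faces are non-consecutive'' only gives $m_p(v)\le \lfloor (2d(v)-13)/2\rfloor = d(v)-7$. So the tight case $m_p(v)=d(v)-7$ leaves you $\tfrac12$ short, and you must locate one incident face that receives $0$ from $v$. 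The paper does this in two branches: if some poor face is a $4^+$-face, then one of its two neighboring faces around $v$ is a $4^+$-face containing two non-adjacent $7^+$-vertices (hence neither a $3$-face nor semi-poor) and receives nothing; if every poor face is a poor $3$-face, then the flanking $4^*$-vertices forced by the special-$7$ structure give $m_{4^*}(v)\ge m_p(v)+1$ and $m_{4^*}(v)+2m_p(v)\le d(v)$, which pushes $m_p(v)$ strictly below $d(v)-7$ for $d(v)\ge 12$, while $d(v)=10$ requires a separate check that some incident face is a non-semi-poor $4^+$-face. Your observation about ``no two consecutive semi-poor or poor $4$-faces'' is correct but addresses a different issue and does not by itself produce a zero-charge face or improve the $m_p(v)$ bound.
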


\begin{proof}
By Lemmas~\ref{lem1} and~\ref{lem2}, $d(v)\in \{4,6\}$ or $d(v)\ge 7$.  Since none of $4$-vertices and $6$-vertices are involved in the discharging process, $\mu^*(v)=d(v)-4\ge 0$ if $d(v)\in \{4,6\}$. So assume that $d(v)\ge 7$.

Suppose that $d(v)=7$. By (R2), $v$ gives either $\frac{1}{2}$ or $0$ to each incident face. Note that $v$ can afford to give six $\frac{1}{2}$, since the initial charge of $v$ is $3$. Therefore, we assume by contradiction that $v$ gives $\frac{1}{2}$ to each incident face by (R2).  By Lemma~\ref{lem4}, $v$ has at most one easy neighbor. If $v$ is incident to a semi-poor $4$-face or a semi-poor $5$-face with two $7$-vertices, say $f_2$, then $f_1$ or $f_3$ cannot be a $3$-face, for otherwise $v$ has at least two easy neighbors (namely, vertices with $2$-neighbors and/or $7$-vertex), a contradiction. So $f_1$ or $f_3$ is a  non-semi-poor $4^+$-face, who gets nothing from $v$ by (R2).  %So we may assume that $v$ is not incident to any semi-poor face that needs charge from $v$.  Note that each neighbor of $v$ in $G^*$ is a $7^+$-vertex by Lemma~\ref{lem3} and $v$ is incident to no $(7,7,9^-)$ by Lemma~\ref{lem4} and~\ref{lem8}. If $v$ is not incident to a $(7,7,10^+)$-face, then by the parity condition, $v$ must be incident to a $4^+$-face or a $(7,8^+,8^+)$-face, which gets nothing from $v$. So $\mu^*(v)\ge7-4-\frac{1}{2}\cdot6=0$. If $v$ is incident to a $(7,7,10^+)$-face, say $f_1=[v_1vv_2]$, then $v$ gives nothing to $f_1$ when $v$ is special, gives $\frac{1}{2}$ to $f_1$ when $v$ is not special, but in this case, $v$ must be incident to a $4^+$-face or a $(7,8^+,8^+)$-face, which gets nothing from $v$. So $\mu^*(v)\ge7-4-\frac{1}{2}\cdot6=0$.
So we assume that $v$ is incident seven $3$-faces each of which needs charge from $v$ by (R2). As $v$ has at most three $4^*$-neighbors, it is incident to a $3$-face without $4^*$-vertices, thus must be a $(7,7,10^+)$-face and $v$ is a  non-special $7$-vertex. Now that $v$ cannot have other easy neighbors in particular $7$-neighbors, all other incident $3$-faces must contain $4^*$-vertices, thus $v$ must be a special $7$-vertex, a contradiction.

Suppose that $d(v)\ge 8$. Let $m_p(v)$ be the number of poor $3^+$-faces incident to $v$. Note that each poor face contributes two easy neighbors to $v$ by the definition and any two poor faces of $v$ cannot be consecutive around $v$. So $m_p(v)\le \lfloor\frac{n_e(v)}{2}\rfloor\le\lfloor\frac{2d(v)-13}{2}\rfloor=d(v)-7$. By (R1) each $8^+$-vertex gives $1$ to each poor face and at most $\frac{1}{2}$ to each other incident face. If $m_p(v)\le d(v)-8$, then $\mu^*(v)\ge d(v)-4-1\cdot m_p(v)-\frac{1}{2}\cdot (d(v)-m_p(v))\ge \frac{1}{2}(d(v)-8-m_p(v))\ge 0.$ So we may assume that $m_p(v)=d(v)-7$. Then $\mu^*(v)\ge d(v)-4-1\cdot m_p(v)-\frac{1}{2}\cdot (d(v)-m_p(v))\ge \frac{1}{2}(d(v)-8-m_p(v))\ge -\frac{1}{2}.$ So we will show that we can save at least $\frac{1}{2}$ for $v$. If $v$ is incident to a poor $4^+$-face, say $f_2$, then $f_1$ or $f_3$, say $f_1$, cannot be a $3$-face since $v$ has at most $(2d(v)-13)$ easy neighbors. Note that $f_1$ is a $4^+$-face in $G^*$ with two non-adjacent $7^+$-vertices, thus cannot be semi-poor. So $f_1$ gets nothing from $v$, thus save $\frac{1}{2}$ for $v$. So we may assume that each incident poor face of $v$ is a poor $3$-face (that is, a $(10^+, 7,7)$-face with two special $7$-vertices). Recall that a $7$-vertex has at most one easy neighbor, so $d(v)$ must be even. Note that if $f_i$ is a poor $3$-face of $v$, then $v_{i-1}$ and $v_{i+2}$ must be $4^*$-vertices by the definition of poor $3$-face. Let $m_{4^*}(v)$ be the number of $4^*$-neighbors of $v$. We have $m_{4^*}(v)\ge m_p(v)+1$ and $m_{4^*}(v)+2m_p(v)\le d(v)$. So $m_p(v)\le\lfloor\frac{d(v)-1}{3}\rfloor$. Therefore, we can save at least  $\frac{1}{2}(\lfloor\frac{2d(v)-13}{2}\rfloor- \lfloor\frac{d(v)-1}{3}\rfloor)\ge1$ if $d(v)\ge12$. So we may assume that $d(v)=10$. Then $v$ is incident to three poor $3$-faces and has four $4^*$-neighbor, for otherwise, $\mu^*(v)\ge10-4-1\cdot2-\frac{1}{2}\cdot8=0$. So at least one incident face of $v$ is a $4^+$-face with two non-adjacent $8^+$-vertices, thus cannot be semi-poor and gets nothing from $v$. We also save $\frac{1}{2}$.
%\blue{Since any face incident to $v$ contains a $8^+$-vertex (that is, $v$), it cannot be a poor $3$-face.  Let $f_2$ be a poor $4^+$-face. Then $f_1$ and $f_3$ must contain neighbors of $2$-vertices, thus contains a $7^+$-vertex other than $v$. Then none of $f_1$ and $f_3$ is a poor or semi-poor face, and need nothing from $v$, thus we save at least $1$ in the final charge calculation of $v$. }
\end{proof}

\section{Appendix}
In this section, we assume that $G$ is a 1-planar graph that does not have an odd coloring of $c\ge7$ colors, but any graph with fewer vertices than $G$ has one. Consider $G^*$ is a plane drawing of a 1-planar graph such that each edge has at most one crossing point, and all crossing points are as few as possible.

\begin{lemma}(Claim 1 in ~\cite{CLS22+})
$G$ is $2$-edge-connected and thus $\delta(G)\ge 2$.
\end{lemma}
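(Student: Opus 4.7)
The plan is to prove the lemma by three standard minimality arguments: first showing $G$ is connected, then that $G$ has no leaf, and finally that $G$ has no bridge. Throughout, the minimality hypothesis is that every graph on fewer vertices has an odd $c$-coloring, where $c \ge 7$.

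First I would show $G$ is connected. Otherwise each component has strictly fewer vertices than $G$ and so, by minimality, admits an odd $c$-coloring; the disjoint union of these colorings gives an odd $c$-coloring of $G$ (there are no cross-component constraints), contradicting the choice of $G$.

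Next I would rule out vertices of degree at most $1$. Suppose $v$ has degree $1$ with unique neighbor $u$; the degree-$0$ case is absorbed into the connectivity step on a nontrivial counterexample. By minimality, $G - v$ has an odd $c$-coloring $\varphi$, and we only need to color $v$. Properness forbids one color, namely $\varphi(u)$. The odd-color condition at $v$ itself is automatic, since $|N_G(v)|=1$ guarantees $\varphi(u)$ appears exactly once on $N_G(v)$. To preserve an odd color at $u$, it suffices to avoid at most one extra color (an existing odd color of $u$ in $G-v$, if $d(u)\ge 2$; if $d(u)=1$ the pair $uv$ is a component, handled above). Since $c\ge 7$, a valid color for $v$ exists, contradicting minimality.

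Finally I would show $G$ has no bridge. Suppose $uv$ is a bridge, and let $G_1, G_2$ be the components of $G - uv$ containing $u,v$ respectively; by the no-leaf step, $|V(G_i)|\ge 2$. Let $G_i' = G[V(G_i)\cup\{\text{other endpoint}\}]$; each is a $1$-planar subgraph with fewer vertices than $G$, hence admits an odd $c$-coloring $\varphi_i$ by minimality. The idea is to glue $\varphi_1$ and $\varphi_2$ into an odd $c$-coloring of $G$: choose a permutation $\pi$ of $[c]$ with $\pi(\varphi_2(u)) = \varphi_1(u)$ and $\pi(\varphi_2(v)) = \varphi_1(v)$ (possible because $c \ge 2$ and each $\varphi_i$ is proper at $uv$), and define $\psi$ on $V(G)$ by $\psi = \varphi_1$ on $V(G_1)$ and $\psi = \pi\circ\varphi_2$ on $V(G_2)$. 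Since $uv$ is the unique edge crossing the cut and both definitions agree at $\{u,v\}$, $\psi$ is proper; for oddness, every vertex outside $\{u,v\}$ has its $G$-neighborhood entirely on one side, and for $u$ (resp.\ $v$) the $G$-neighborhood coincides with the neighborhood in $G_1'$ (resp.\ $G_2'$), while $\pi$ preserves all color-multiplicities and hence parities. This contradicts minimality.

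The main obstacle is the bridge step, specifically the gluing: one must simultaneously match both $\varphi_1(u)=(\pi\circ\varphi_2)(u)$ and $\varphi_1(v)=(\pi\circ\varphi_2)(v)$ to keep the coloring proper across the bridge, and verify that permuting colors on one side preserves the odd-color condition at $u$ and $v$ where the two sides meet. The other two steps are routine once the right auxiliary graphs ($G-v$ and $G_i'$) are chosen so that the minimality hypothesis applies.
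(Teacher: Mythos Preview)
Your argument is correct. The route differs from the paper's in the bridge step: the paper colors the two components $G_1,G_2$ of $G-uv$ directly (so $u$ loses the neighbor $v$ and vice versa), and then performs two color swaps---first in $G_1$ to make $\phi(u)$ avoid an odd color of $v$, then in $G_2$ to make $\phi'(v)$ avoid both $\phi(u)$ and an odd color of $u$---so that adding the edge $uv$ back preserves both properness and the odd condition at the endpoints. Your trick of passing instead to the overlapping induced subgraphs $G_i'=G[V(G_i)\cup\{\text{other endpoint}\}]$ is a genuinely different packaging: because $N_G(u)=N_{G_1'}(u)$ and $N_G(v)=N_{G_2'}(v)$, the odd condition at $u$ and $v$ comes for free once the two colorings are permuted to agree on $\{u,v\}$, with no separate parity bookkeeping. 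The cost is that you must first dispose of leaves so that $|V(G_i')|<|V(G)|$, whence your three-step layout; the paper's single bridge argument absorbs the leaf case (a leaf makes one component a single vertex) but must then be read with the convention that an undefined $\phi'_o(v)$ imposes no constraint. Both approaches are short and standard; yours trades one extra reduction for a cleaner gluing.
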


\begin{proof}
Suppose otherwise that $G$ is not 2-edge-connected. Let $e=uv$ be an edge in $G$ such that $G-e$ is disconnected. Let $G_1$ be the component of $G-e$ containing $u$ and $G_2$ be the component of $G-e$ containing $v$. By our assumption, $G_1$ has an odd $c$-coloring $\phi$ and $G_2$ has an odd $c$-coloring $\phi'$. We choose a color $a$ distinct from $\phi'_o(v)$,  exchange the two colors $a$ and $\phi(u)$ in $G_1$ when $\phi(u)\ne a$, and then choose a color $a'$ distinct from $a$ and $\phi_o(u)$, exchange the color $a'$ and $\phi'(v)$ in $G_2$ when $\phi'(v)\ne a'$. Now we guarantee that $u$ and $v$ have distinct colors and each of them has a color that occurs an odd number of times on its neighbors. So we get an odd $c$-coloring of $G$, a contradiction.
\end{proof}

\begin{lemma}(Claim 2 in ~\cite{CLS22+})
Every vertex of odd degree in $G$ has degree at least $\lfloor\frac{c+1}{2}\rfloor$.
\end{lemma}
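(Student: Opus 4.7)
The plan is to suppose, for contradiction, that some vertex $v$ has odd degree with $d(v) \le \lfloor(c+1)/2\rfloor - 1$, apply the minimality of $G$ to extract an odd $c$-coloring $\phi$ of $G - v$, and then extend $\phi$ to $v$ by controlling the list of forbidden colors at $v$. The target estimate is that at most $2d(v) \le c - 1$ colors are forbidden, leaving at least one free choice for $\phi(v)$.

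First I would enumerate the forbidden colors at $v$. Properness rules out the set $\{\phi(w) : w \in N_G(v)\}$, contributing at most $d(v)$ colors. For each neighbor $w$ of $v$, the previous lemma gives $\delta(G) \ge 2$, so $w$ is non-isolated in $G - v$; hence $\phi$ assigns $w$ some odd color $\phi_o(w)$ that appears an odd number of times on $N_{G-v}(w)$. Forbidding $\phi(v) = \phi_o(w)$ preserves the odd parity of $\phi_o(w)$ in $N_G(w)$, so $w$ still has an odd color after the extension. This contributes at most a further $d(v)$ forbidden colors.

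The key observation that specializes the argument to odd-degree $v$ is that $v$'s own oddness requirement is automatic. Indeed, the color-multiplicities on $N_G(v)$ sum to $d(v)$, which is odd, so at least one multiplicity must be odd; hence any proper extension of $\phi$ already supplies $v$ with an odd color, and no further colors need be forbidden for $v$'s sake. A direct check using the hypothesis $d(v) \le \lfloor(c+1)/2\rfloor - 1$ shows $2d(v) \le c - 1$ regardless of the parity of $c$, so an available color remains, producing an odd $c$-coloring of $G$ and the desired contradiction.

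I do not foresee a substantive obstacle; the proof is a routine single-vertex extension. The only subtle points worth flagging are the use of $\delta(G) \ge 2$ to guarantee that each $w \in N_G(v)$ has some odd color under $\phi$ worth preserving, and the parity-of-$d(v)$ observation that spares us the second round of forbidden colors that would otherwise be needed for $v$ itself.
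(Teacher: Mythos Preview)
Your argument is correct and follows essentially the same single-vertex extension as the paper: delete $v$, color $G-v$ by minimality, forbid at most $2d(v)\le c-1$ colors (neighbors' colors plus one odd color per neighbor), and use the odd parity of $d(v)$ to guarantee $v$ itself has an odd color. Your explicit invocation of $\delta(G)\ge 2$ to ensure each neighbor $w$ is non-isolated in $G-v$ (so $\phi_o(w)$ exists) is a nice piece of care that the paper's version leaves implicit.
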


\begin{proof}
Suppose otherwise that $G$ has a vertex $v$ of odd degree and $d(v)\le\lfloor\frac{c-1}{2}\rfloor$. By our assumption, $G-v$ has an odd $c$-coloring $\phi$. Then $v$ can be colored by avoiding at most $c-1$ colors, namely, the $\lfloor\frac{c-1}{2}\rfloor$ colors used for $v$'s neighbors and at most $\lfloor\frac{c-1}{2}\rfloor$ additional colors in regard to oddness concerning the neighbors of $v$. Since $v$ is of odd degree, there is at least one color appearing an odd number of times on $N_G(v)$. So we get an odd $c$-coloring of $G$ that extends $\phi$, a contradiction.
\end{proof}

\begin{lemma}(Claim 4 in ~\cite{CLS22+})
Every edge incident to a $\lfloor\frac{c-1}{2}\rfloor^-$-vertex in $G$ has a crossing.
\end{lemma}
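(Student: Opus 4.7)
I plan to argue by contradiction. Suppose some uncrossed edge $uv\in E(G)$ satisfies $d(v)\le \lfloor(c-1)/2\rfloor$. By the previous claim, $d(v)$ must be even, since an odd-degree vertex of $G$ has degree at least $\lfloor(c+1)/2\rfloor>\lfloor(c-1)/2\rfloor$. Let $k=d(v)$ and label the neighbors of $v$ as $u=u_0,u_1,\ldots,u_{k-1}$ in cyclic order around $v$ in the 1-planar drawing of $G$.

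The construction is the auxiliary graph
\[
G^{\dagger}=(G-v)\cup\{\,uu_i:\,1\le i\le k-1,\ uu_i\notin E(G)\,\}.
\]
The first step is to show that $G^{\dagger}$ is 1-planar: because $uv$ is uncrossed, each new edge $uu_i$ can be routed as a fan emanating from $u$ along the former location of $uv$, passing through the former position of $v$, and continuing to $u_i$ along the former location of $vu_i$. Each new edge thereby inherits at most the single crossing formerly on $vu_i$ (if any), and the fan edges do not cross one another. Since $|V(G^{\dagger})|=|V(G)|-1$, minimality of $G$ provides an odd $c$-coloring $\phi^{\dagger}$ of $G^{\dagger}$.

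I then extend $\phi^{\dagger}$ to an odd $c$-coloring of $G$ by selecting a color $c'$ for $v$. Because $uu_i\in E(G^{\dagger})$ for every $i\ge 1$, properness of $\phi^{\dagger}$ forces $\phi^{\dagger}(u)\ne\phi^{\dagger}(u_i)$ for all such $i$, so $\phi^{\dagger}(u)$ appears exactly once on $N_G(v)$ and is an odd color of $v$ independent of $c'$. Properness in $G$ forbids at most $k$ choices of $c'$. For each neighbor $w$ of $v$ in $G$, the color multiset on $N_G(w)$ is obtained from the one on $N_{G^{\dagger}}(w)$ either by adding $c'$ (when $uw\in E(G)$) or by removing $\phi^{\dagger}(u)$ and adding $c'$ (when $uw$ was newly added in $G^{\dagger}$); in either case, the set of odd-count colors is transformed by symmetric difference with a set of size one or two, so the oddness of $w$ fails in $G$ only when this intermediate odd-color set coincides with a specific singleton or 2-element set containing $c'$. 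A short parity check shows that this forbids at most one color per neighbor, or at most $k$ colors in total. Altogether at most $2k\le c-1$ colors are forbidden, so an admissible $c'$ exists, and $\phi^{\dagger}$ extends to an odd $c$-coloring of $G$, contradicting the choice of $G$.

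The main obstacle is the geometric step establishing the 1-planarity of $G^{\dagger}$: the fan construction relies essentially on $uv$ being uncrossed to supply the free region needed without creating forbidden second crossings on already-crossed edges. The subsequent extension is a careful but routine parity bookkeeping that reduces to the familiar bound of one forbidden color per neighbor.
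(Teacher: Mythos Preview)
Your approach is the same as the paper's: delete $v$, fan $u$ out to the other neighbors of $v$ using the uncrossed edge $uv$ to keep the drawing 1-planar, take an odd $c$-coloring $\phi^{\dagger}$ of the smaller graph by minimality, and extend it to $v$ by avoiding at most $2d(v)\le c-1$ colors, with the uniqueness of $\phi^{\dagger}(u)$ on $N_G(v)$ supplying the odd color for $v$.

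There is one gap in your bookkeeping. Your two cases for how $N_G(w)$ differs from $N_{G^{\dagger}}(w)$ (``add $c'$'' or ``remove $\phi^{\dagger}(u)$ and add $c'$'') only cover neighbors $w\ne u$. For $w=u$ itself, passing from $G^{\dagger}$ back to $G$ removes \emph{all} of the newly-added edges $uu_i$ and adds the single edge $uv$, so the color multiset on $N(u)$ may change in many coordinates at once, and your ``symmetric difference with a set of size one or two'' description does not apply. The clean fix, which is how the paper implicitly argues, is not to track the transition from $G^{\dagger}$ at all: for each $w\in N_G(v)$, look directly at the $\phi^{\dagger}$-colors on $N_G(w)\setminus\{v\}$. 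If some color $a$ occurs there an odd number of times, forbidding $c'=a$ keeps $a$ odd on $N_G(w)$; if every color occurs an even number of times, then any choice of $c'$ becomes the unique odd color. Either way at most one value of $c'$ is bad for the oddness of $w$, uniformly over all neighbors including $u$. Together with the $d(v)$ colors forbidden for properness this gives at most $2d(v)\le c-1$ forbidden values, as you wanted. (Your preliminary remark that $d(v)$ must be even is correct but unused.)
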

\begin{proof}
Suppose otherwise that $v$ is a $\lfloor\frac{c-1}{2}\rfloor^-$-vertex such that  $e=uv$ has no crossing in $G$. Let $G'$ be the graph obtained from $G-v$ by adding an edge $uw$ for each $w\in N_G(v)\setminus N_G(u)$. Since edge $uv$ has no crossing, we can guarantee that each new edge $uw$ has a crossing if and only if $vw$ has a crossing. Thus $G'$ is 1-planar. By our assumption, $G'$ has an odd $c$-coloring $\phi$. Then $v$ can be colored by forbidding at most $c-1$ colors, namely, at most two colors in $\phi(w),\phi_o(w)$ for each $w\in N_G(v)$. Note that by our construction, $\phi(u)$ appears exactly once on $N_G(v)$, so we get an odd $c$-coloring of $G$, a contradiction.
\end{proof}

\begin{lemma} (Claim 6 in ~\cite{CLS22+})
The graph $G^*$ has no loop or $2$-face. Every $3$-face in $G^*$ is incident to either three $\lfloor\frac{c+1}{2}\rfloor^+$-vertices or two $\lfloor\frac{c+1}{2}\rfloor^+$-vertices and one $4^*$-vertex.
\end{lemma}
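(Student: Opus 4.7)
The plan is to handle the three assertions of the lemma---no loops, no $2$-faces, and the degree structure on $3$-faces---separately, leveraging the partition $E(G^*)=E_0(G)\cup E_1(G)$, the fact that every edge of $E_1(G)$ has exactly one endpoint in $V(G)$ and one in $X(G)$, and the good-drawing assumption which implies that the four endpoints of any two crossing edges of $G$ are pairwise distinct; in particular, no two crossing points are adjacent in $G^*$.

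Loops are immediate, since edges of $E_0(G)$ inherit looplessness from the simple graph $G$, and any edge $xz$ of $E_1(G)$ has $x\in V(G)$ and $z\in X(G)$ distinct. For $2$-faces, I would suppose such a face is bounded by parallel edges $e_1,e_2$ between vertices $u,v$ of $G^*$ and split into three subcases. If $e_1,e_2\in E_0(G)$, then $G$ has parallel edges $uv$, contradicting simplicity. If exactly one of them lies in $E_1(G)$, the endpoint types conflict: an $E_0$-edge has both endpoints in $V(G)$ while an $E_1$-edge has one endpoint in $X(G)$. If both $e_1,e_2\in E_1(G)$, then among $\{u,v\}$ one must be a crossing point, say $v=z$; both edges are then sub-edges incident to $z$, but the four sub-edges incident to $z$ lead to the four pairwise distinct original endpoints of the two crossed edges, so two of them cannot share $u$ as their other endpoint.

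For the $3$-face claim, let $f=[uvw]$. Since crossing points are pairwise non-adjacent in $G^*$, at most one of $u,v,w$ is a $4^*$-vertex. Any boundary edge joining two vertices in $V(G)$ lies in $E_0(G)$, so by Claim 4 of the appendix both its endpoints have degree at least $\lfloor\frac{c+1}{2}\rfloor$ (equivalently, strictly more than $\lfloor\frac{c-1}{2}\rfloor$). Hence if $u,v,w\in V(G)$, all three are $\lfloor\frac{c+1}{2}\rfloor^+$-vertices; and if one of them, say $w$, is a crossing point, the remaining boundary edge $uv\in E_0(G)$ still forces $u$ and $v$ to be $\lfloor\frac{c+1}{2}\rfloor^+$-vertices.

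The main obstacle is closing the $2$-face case in which both boundary edges lie in $E_1(G)$: this step relies crucially on the good-drawing hypothesis that the four endpoints of any two crossing edges are pairwise distinct, without which two distinct $E_1$-sub-edges incident to the same crossing point could share their other endpoint and produce a $2$-face. Every other step is essentially bookkeeping about which edges carry crossings and which do not.
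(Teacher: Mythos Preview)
Your proposal is correct and follows essentially the same approach as the paper. The only minor difference is in the $2$-face case with both boundary edges in $E_1(G)$: you invoke the already-established fact that the four endpoints of any crossing pair are pairwise distinct, whereas the paper re-derives this locally by rotating the two crossing edges at the shared endpoint to reduce the number of crossings; these are two sides of the same minimality argument, and the $3$-face argument is the same (contrapositive) reasoning via Claim~4.
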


\begin{proof}
Clearly, $G^*$ cannot have any loops, since $G$ is loopless. Suppose that $G^*$ contains a $2$-face, with boundary $vw$. Since $G$ contains no parallel edges and two $4^*$-vertices are not adjacent in $G^*$, we may assume that $v\in V(G)$ and $w$ is a $4^*$-vertex. Note that $w$ arises in $G^*$ from a crossing of two edges $e,e'$ in $G$ that both incident to $v$. Then we can rotate $e$ and $e'$ to get an embedding of $G$ with fewer edge crossings. This contradicts our assumption, which prove the first statement.

Let $v$ be a $\lfloor\frac{c-1}{2}\rfloor^-$-vertex in $G$. By Lemma~\ref{lem4} every edge incident to $v$ has a crossing. So each neighbor of $v$ is a $4^*$-vertex. Recall that no two $4^*$-vertices are adjacent in $G^*$, so $v$ is not incident to any $3$-face in $G^*$. This proves the second statement.
\end{proof}

\begin{lemma} (Claim 7 in ~\cite{CLS22+})
Every $2$-vertex $v$ in $G^*$ is incident to a $5^+$-face and a $4^+$-face.
\end{lemma}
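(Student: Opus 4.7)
The plan is to use the earlier forced-crossing lemma to identify $v$'s neighbors in $G^*$, then treat the two face-size bounds separately. Write $N_G(v)=\{u,w\}$. Since $d_G(v)=2\le\lfloor(c-1)/2\rfloor$ for $c\ge 7$, both edges $uv$ and $vw$ are crossed in the drawing; let $z_1$ be the crossing of $uv$ with some edge $u_1u_2\in E(G)$ and $z_2$ the crossing of $vw$ with some edge $w_1w_2\in E(G)$. Then $v$'s two neighbors in $G^*$ are the $4^*$-vertices $z_1$ and $z_2$, and the paper's standing assumption that the four endpoints of any pair of crossing edges are pairwise distinct gives $u_1\ne u_2$, $w_1\ne w_2$, and that these labels do not collapse onto $u,v,w$.

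Each face incident to $v$ has boundary containing $v,z_1,z_2$. Since every edge of $G^*$ has at least one endpoint in $V(G)$, the two $4^*$-vertices $z_1,z_2$ cannot be adjacent in $G^*$, so the portion of each face boundary from $z_1$ to $z_2$ not through $v$ contains at least one additional vertex. Hence both faces at $v$ are $4^+$-faces, establishing the $4^+$-face half of the claim. For the $5^+$-face half, I would argue by contradiction: suppose both faces at $v$ are $4$-faces, of the form $[v,z_1,x,z_2]$ and $[v,z_2,y,z_1]$. Around each $4^*$-vertex the halves of its two crossing edges alternate in cyclic order (the crossing is transversal), so at $z_1$ the next vertex after $v$ along one incident face is $u_1$ and along the other is $u_2$; this forces $\{x,y\}=\{u_1,u_2\}$, and the identical reasoning at $z_2$ gives $\{x,y\}=\{w_1,w_2\}$. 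Therefore the edges $u_1u_2$ and $w_1w_2$ coincide as an edge of $G$. But this single edge is then crossed at both $z_1$ and $z_2$, contradicting $1$-planarity.

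The main obstacle is the alternating cyclic-order argument at the $4^*$-vertices: one has to carefully use that the two crossing edges at a $4^*$-vertex contribute their four half-edges to $G^*$ in an interleaved cyclic pattern, so that the two faces on either side of the edge $vz_i$ really do meet different endpoints of the transverse crossing edge, and simultaneously match correctly between $z_1$ and $z_2$. Once that geometric fact is nailed down, the forced coincidence $\{u_1,u_2\}=\{w_1,w_2\}$ contradicts $1$-planarity immediately.
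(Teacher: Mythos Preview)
Your argument is correct and follows essentially the same route as the paper's proof: identify the two $G^*$-neighbors of $v$ as $4^*$-vertices, deduce both incident faces are $4^+$-faces since $4^*$-vertices are nonadjacent, then assume both are $4$-faces $[v,z_1,x,z_2]$ and $[v,z_2,y,z_1]$ and derive a contradiction from the identities of $x$ and $y$. The only cosmetic difference is in the final step: the paper phrases the contradiction as a multiple edge (between $x$ and $y$), whereas you phrase it as a single edge of $G$ being crossed twice; since $G$ is simple these are equivalent, and your explicit use of the alternating cyclic order of half-edges at each $4^*$-vertex to pin down $\{x,y\}=\{u_1,u_2\}=\{w_1,w_2\}$ is in fact a point the paper leaves implicit.
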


\begin{proof}
Let $v$ be a $2$-vertex in $G^*$ and let $u$ and $w$ be the neighbors of $v$ in $G^*$. By Lemma~\ref{lem4} both $u$ and $w$ are $4^*$-vertices. Recall that $G^*$ is 2-edge-connected. Let $f$ and $f'$ be the two distinct faces that contains the vertex $v$. So $u,v,w\in V_G^*(f)\cap V_G^*(f')$. Since $vw\in E(G^*)$, both $f$ and $f'$ are $4^+$-faces. Suppose that both $f$ and $f'$ are $4$-faces. Let $x,y$ be the remaining vertices of $f$ and $f'$, respectively. Now each vertex in $\{u,w\}$ is adjacent to both $x$ and $y$ in $G^*$. But this implies that $G$ has a multiple edge between $u$ and $w$, a contradiction.
\end{proof}
\end{document}